\DeclareMathOperator{\ca}{Cap}
\newcommand{\da}{\downarrow}
\newcommand{\up}{\uparrow}
\newcommand{\C}{\mathbb{C}}
\newcommand{\R}{\mathbb{R}}
\newcommand{\N}{\mathbb{N}}
\newcommand{\F}{\mathbb{F}}
\newcommand{\PP}{\mathbb{P}}
\newcommand{\DD}{{\mathbb D}}
\newcommand{\E}{\texthmin{E}}
\newcommand{\cm}{{\bf{x}}}
\newcommand{\RR}{\mathcal R}
\newcommand{\Om}{\Omega}
\newcommand{\om}{\omega}
\newcommand{\si}{\sigma}
\newcommand{\al}{\alpha}
\newcommand{\be}{\beta}
\newcommand{\Ga}{\Gamma}
\newcommand{\ga}{\gamma}
\newcommand{\de}{\delta}
\newcommand{\eps}{\varepsilon}
\newcommand{\ess}{\text{\rm{ess}}}
\newcommand{\s}{\text{\rm{s}}}
\newcommand{\Arg}{\operatorname{Arg}}
\newcommand{\im}{\operatorname{Im}}
\newcommand{\re}{\operatorname{Re}}
\newcommand{\id}{\operatorname{id}}
\newcommand{\diam}{\text{\rm{diam}}}
\newcommand{\cl}{\operatorname{cl}}
\newcommand{\ran}{\operatorname{Ran}}
\DeclareMathOperator{\supp}{supp}
\newtheorem{theorem}{Theorem}
\newtheorem{lem}{Lemma}
\newtheorem{prop}{Proposition}
\newtheorem*{cor}{Corollary}
\newtheorem*{thm}{Theorem}
\theoremstyle{definition}
\newtheorem*{definition}{Definition}
\theoremstyle{remark}
\newtheorem*{remark}{Remark}
\newtheorem*{example}{Example}
\numberwithin{equation}{section}
\begin{document}

\title%[Running Heads: ]
{Szeg\H{o}'s theorem on Parreau--Widom sets}

\author%[Running Heads: ]
{Jacob S. Christiansen}

\address{
Department of Mathematical Sciences \\
University of Copenhagen \\
Universitetsparken 5 \\
2100 Copenhagen \\
Denmark}

\email{stordal@math.ku.dk}

\thanks{The author was supported by a Steno Research Grant (09-064947) from the Danish Research Council for Nature and Universe.}

\subjclass[2000]{Primary 42C05, 47B36; Secondary 30Jxx, 31Axx}

\keywords{Szeg\H{o} integral, eigenvalue sums, Parreau--Widom sets}

%\date{February, 2011}

\begin{abstract}
In this paper, we generalize Szeg\H{o}'s theorem for orthogonal polynomials on the real line to infinite gap sets of Parreau--Widom type. This notion includes Cantor sets of positive measure. The Szeg\H{o} condition involves the equilibrium measure which is shown to be absolutely continuous. Our approach builds on a canonical factorization of the $M$-function and the covering space formalism of Sodin--Yuditskii.
\end{abstract}

\maketitle

\section{Introduction}
Let $d\nu=w(\theta)\frac{d\theta}{2\pi}+d\nu_\s$ be a finite positive measure on the unit circle $\partial\DD$, with $d\nu_\s$ singular to $d\theta$. A classical result of Szeg\H{o} \cite{Szego} reads
\begin{equation}
\label{Sz thm}
\inf_{p\in\PP}\biggl\{\int\vert 1-p\vert^2\, d\nu\biggr\}=
\exp\biggl\{\int_0^{2\pi}\log w(\theta)\frac{d\theta}{2\pi}\biggr\},
\end{equation}
where $\PP$ is the set of polynomials vanishing at zero. So the infimum on the left-hand side
is $>0$ if and only if the integral on the right-hand side, also known as the \emph{Szeg\H{o} integral}, is convergent (i.e., $>-\infty$). Strictly speaking, Szeg\H{o} only considered absolutely continuous measures but one can allow for a singular part too.

There are many equivalent forms of Szeg\H{o}'s theorem, see, e.g., \cite[Chap.~2]{MR2105088}, and it is bound up with asymptotics of Toeplitz determinants. From the point of view of orthogonal polynomials, perhaps the most suitable formulation is due to Verblunsky \cite{MR007}. It re\-places the left-hand side in \eqref{Sz thm} by
\begin{equation}
\label{product} \notag
\prod_{n=0}^\infty\bigl(1-\vert\al_n\vert^2\bigr),
\end{equation}
where $\{\al_n\}_{n=0}^\infty$ are the recurrence coefficients of the associated monic orthogonal polynomials.
%In particular, we have
%\begin{equation}
%\label{Verblunsky}
%\sum_{n=0}^\infty\vert\al_n\vert^2<\infty
%\quad\iff\quad
%\int_0^{2\pi}\log w(\theta)\frac{d\theta}{2\pi}>-\infty.
%\end{equation}

Without any problems, one can carry over the result of Szeg\H{o} to measures supported on an interval of the real line. Let $d\mu=f(t)dt+d\mu_\s$ be a probability measure supported on $[-2,2]$ and let $\{a_n, b_n\}_{n=1}^\infty$ be the recurrence coefficients of the associated orthonormal polynomials. Then
\begin{equation}
\label{[-2,2]}
\inf_{n\in\N}\, (a_1\cdots a_n)>0
\quad\iff\quad
\int_{-2}^2\frac{\log f(t)}{\sqrt{4-t^2}}\,dt>-\infty.
\end{equation}
It is possible to allow for point masses of $d\mu_\s$ outside $[-2,2]$ as long as the mass points $\{x_k\}$ satisfy the condition
\begin{equation}
\label{sqrt} \notag
\sum_k \sqrt{(x_k+2)(x_k-2)}<\infty,
\end{equation}
see \cite{MR1844996, MR2020274} for details. If infinite in number, the $x_k$'s thus have to accumulate sufficiently fast at the endpoints $\pm 2$.

The aim of this paper is to establish a version of Szeg\H{o}'s theorem on sets in $\R$ much more general than an interval, namely what we shall call Parreau--Widom sets. The notion of such sets will be introduced in Sect.~\ref{secPW} and we shall only give a brief description here.
%It relies on potential theory that now plays a more prominent part.
Among all regular compact subsets of $\R$, the Parreau--Widom condition \eqref{PW} singles out those for which the values of the Green's function at critical points are summable. In particular, it allows for a great number of sets with infinitely many components.

Our main result, Thm.~\ref{main thm} (in Sect.~\ref{proof}), goes beyond the recent monograph \cite{Simon} of Simon as to generalizing the Szeg\H{o}--Shohat--Nevai theorem, using the language of \cite{Simon}. The situation of `finite gap sets' was studied in \cite{MR2659747, CSZ2}, inspired by Widom's famous paper \cite{MR0239059} and the landmark paper \cite{MR1981915} of Peherstorfer--Yuditskii. For a finite gap set $\frak{e}$, the product in \eqref{[-2,2]} has to be replaced by
\begin{equation}
\label{cap}
\frac{a_1\cdots a_n}{\ca(\frak{e})^n},
\end{equation}
where $\ca(\frak{e})$ is the logarithmic capacity of $\frak{e}$. Moreover, the Szeg\H{o} condition takes the form
\begin{equation}
\label{Sz int}
\int_\frak{e}\log f(t)\, d\mu_\frak{e}(t)>-\infty,
\end{equation}
where $d\mu_\frak{e}$ is the equilibrium measure of $\frak{e}$. For comparison, $d\theta/2\pi$ is the equilibrium measure of $\partial\DD$ and $d\mu_{[a,b]}$ is a scaled arcsine distribution.
While \cite{MR0239059} uses multiple-valued functions, \cite{MR1981915} makes heavy use of the covering space formalism introduced by Sodin--Yuditskii \cite{MR1674798}. The present paper also builds on \cite{MR1674798}, as do \cite{MR2659747, CSZ2}, and we give some background on uniformization theory in Sect.~\ref{secPW}.

The core of \cite{MR1981915, PY} is to establish Szeg\H{o} asymptotics of orthogonal polynomials on so-called homogeneous sets in $\R$. This is done under the Szeg\H{o} condition and a Blaschke condition (similar to \eqref{Blaschke}) by comparing the solutions of two extremal problems. As a by-product, the implication `$\Leftarrow$' of \eqref{[-2,2]} (or rather \eqref{iff} below) is obtained. In contrast, the main tool of the present paper is a step-by-step sum rule obtained from a canonical factorization of the $M$-function. This technique was developed by Killip--Simon \cite{MR1999923} and is applied repeatedly in the monograph \cite{Simon}. We shall establish the desired factorization in Sect.~\ref{secM} and arrive at the step-by-step sum rule in \eqref{step-by-step}--\eqref{n step}. In Sect.~\ref{secP}, we show that the equilibrium measure of a Parreau--Widom set is absolutely continuous, provided the set has positive Lebesgue measure. Hence the Szeg\H{o} integral is a relative entropy (up to some constant), and known properties of relative entropy combined with uniform upper bounds on certain eigenvalue sums allow us to iterate the step-by-step sum rule and pass to the limit. With all preparations in place, the proof of Thm.~\ref{main thm} is merely half a page.

%\medskip

Every now and then we shall use the language of Jacobi matrices rather than the one of measures. As is well known, there is a one-one correspondence between compactly supported (nontrivial) probability measures on $\R$ and bounded Jacobi matrices. Given $d\mu$, the associated Jacobi matrix is given by
\[
J=\left( \begin{array}{ccccc}
b_1 & a_1 &  &  & \\
a_1 & b_2 & a_2 & &  \\
& a_2 & b_3 & a_3 & \\
& & \ddots & \ddots & \ddots \\
\end{array} \right),
\]
where $\{a_n, b_n\}_{n=1}^\infty\in (0,\infty)^\N\times\R^\N$ are the recurrence coefficients of the orthonormal polynomials $P_n(x, d\mu)$. When $\supp(d\mu)$ is compact, these coefficients are bounded. The spectrum of $J$, viewed as an operator on $\ell^2(\N)$, coincide with $\supp(d\mu)$ and we shall often refer to $d\mu$ as the spectral measure of $J$.
In the language of Jacobi matrices (and with $g$ the Green's function for $\overline{\C}\setminus\large\E$), the main result of the paper reads:
\begin{thm}
Let $J=\{a_n, b_n\}_{n=1}^\infty$ be a Jacobi matrix with spectral measure $d\mu=f(t)dt+d\mu_\s$
and let $\large\E\subset\R$ be a Parreau--Widom set of $\vert\large\E\vert>0$.
Assume that $\si_\ess(J)=\large\E$ and denote by $\{x_k\}$ the eigenvalues of $J$ outside $\large\E$, if any.
On condition that
\begin{equation}
\label{Blaschke}
\sum_k g(x_k)<\infty,
\vspace{-0.3cm}
\end{equation}
we have
\begin{equation}
\label{iff}
\limsup_{n\to\infty} \frac{a_1\cdots a_n}{\ca(\large\E)^n}>0
\quad\iff\quad
\int_\E\log f(t)\,d\mu_\E(t)>-\infty.
\end{equation}
Moreover, if one and hence both of the equivalent conditions in \eqref{iff} hold true, then
\[
0<\liminf_{n\to\infty}\frac{a_1\cdots a_n}{\ca(\large\E)^n}\leq \limsup_{n\to\infty}\frac{a_1\cdots a_n}{\ca(\large\E)^n}<\infty.
\]
\end{thm}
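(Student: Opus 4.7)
The plan is to iterate the step-by-step sum rule \eqref{step-by-step} from Sect.~\ref{secM} and pass to the limit via relative-entropy semicontinuity. Let $d\mu^{(n)}$ denote the spectral measure of the $n$-times stripped Jacobi matrix $J^{(n)}$. The iterated sum rule \eqref{n step} should take the schematic form
\[
\mathcal{Z}(d\mu) + \log\frac{a_1\cdots a_n}{\ca(\E)^n} \;=\; \mathcal{Z}(d\mu^{(n)}) + \mathcal{A}_n + \mathcal{E}_n,
\]
where $\mathcal{A}_n \geq 0$ collects the bulk terms of the canonical $M$-function factorization, $\mathcal{E}_n$ is the Blaschke contribution from the eigenvalues peeled off in the first $n$ steps, and $\mathcal{Z}$ is the Szeg\H{o} functional -- a relative entropy plus tail eigenvalue sum, modulo a constant depending only on $\E$. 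This interpretation uses crucially the absolute continuity of $d\mu_\E$ from Sect.~\ref{secP}, and yields $\mathcal{Z}\geq 0$; by \eqref{Blaschke} we also have $\mathcal{E}_n \leq \sum_k g(x_k) < \infty$ uniformly in $n$.

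The ``$\Leftarrow$'' direction of \eqref{iff} then follows by rearrangement: if $\mathcal{Z}(d\mu)<\infty$, dropping the non-negative quantities $\mathcal{Z}(d\mu^{(n)}), \mathcal{A}_n, \mathcal{E}_n$ gives $-\log(a_1\cdots a_n)/\ca(\E)^n \leq \mathcal{Z}(d\mu)$, whence $\liminf (a_1\cdots a_n)/\ca(\E)^n > 0$. For ``$\Rightarrow$'', take a subsequence $\{n_j\}$ along which $(a_1\cdots a_{n_j})/\ca(\E)^{n_j}$ stays bounded away from $0$. Since $\|J^{(n)}\|\leq\|J\|$, a subsubsequence of $d\mu^{(n_j)}$ converges weak-$*$ to a probability measure on a compact set; lower semi-continuity of the relative entropy, together with the uniform bound on $\mathcal{E}_{n_j}$ and a corresponding control on $\mathcal{A}_{n_j}$ coming from the Parreau--Widom summability of $g$ at critical points, then passes a finite upper bound from the right side of the sum rule to $\mathcal{Z}(d\mu)$.

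For the ``moreover'' assertion, the lower bound $\liminf > 0$ was obtained in the ``$\Leftarrow$'' step once the Szeg\H{o} condition is assumed, and the upper bound $\limsup <\infty$ follows from the general inequality $a_1\cdots a_n \leq \|Q_n\|_{L^2(d\mu)}$ for every monic polynomial $Q_n$ of degree $n$, applied to the Chebyshev polynomial on $\E$ together with the Widom-type estimate for Parreau--Widom sets and a separate treatment of the mass at the eigenvalues via \eqref{Blaschke}.

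The main obstacle is not the limit argument but the algebraic and analytic content of the sum rule itself: verifying that $\mathcal{A}_n \geq 0$ and that the stripping process consistently peels off the eigenvalues' Blaschke weights. This rests on the canonical inner-outer factorization of the $M$-function on the universal cover of $\overline{\C}\setminus\E$ (the Sodin--Yuditskii formalism), where the summability of Green's function values at critical points is precisely the condition that makes the infinite Blaschke product on which the factorization rests converge.
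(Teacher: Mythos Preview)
Your ``$\Leftarrow$'' direction matches the paper's, modulo the schematic: there is no separate non-negative bulk term $\mathcal{A}_n$ in the sum rule \eqref{n step}; the identity is exactly
\[
\log\frac{a_1\cdots a_n}{\ca(\E)^n}=\sum_k g(x_k)-\sum_k g(x_{n,k})+\tfrac12\bigl(S(d\mu)-S(d\mu_n)\bigr),
\]
and the lower bound follows from $S(d\mu_n)\leq 0$ together with the uniform bound on $\sum_k g(x_{n,k})$ from Prop.~\ref{bound on ev}.

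The genuine gap is in ``$\Rightarrow$''. You want to apply \eqref{n step} to $J$ and its strips, then pass to a weak limit of the stripped spectral measures and invoke entropy semicontinuity. This fails twice over. First, \eqref{n step} is derived under the proviso that $\log f$ or $\log f_n$ lies in $L^1(d\mu_\E)$; in this direction neither is known a priori, so you cannot even write the identity down. Second, even if it were available, the weak limit of the stripped measures $d\mu_n$ is not $d\mu$ but (at best) some right limit of $J$, and semicontinuity of $S$ would control $S$ of that limit --- not $S(d\mu)$, which is what you need. The paper's device is to introduce instead the auxiliary matrix $\tilde{J}^{(n)}$ of \eqref{J tilde}, which agrees with $J$ in the first $n$ rows and coincides with the Jacobi matrix $J_\E$ of $d\mu_\E$ thereafter. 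Stripping $\tilde{J}^{(n)}$ exactly $n$ times returns $J_\E$, for which $\log f_\E\in L^1(d\mu_\E)$ by \eqref{logE} and $S(d\mu_\E)=0$, so the sum rule applies unconditionally and collapses to
\[
\log\frac{a_1\cdots a_n}{\ca(\E)^n}=\sum_k g(\tilde{x}_k^{(n)})+\tfrac12\, S(d\tilde{\mu}_n).
\]
Now $\tilde{J}^{(n)}\to J$ strongly, hence $d\tilde{\mu}_n\xrightarrow{w}d\mu$, and upper semicontinuity of $S$ gives $\limsup_n S(d\tilde{\mu}_n)\leq S(d\mu)$, which is precisely what forces $S(d\mu)>-\infty$ once the $\limsup$ on the left is positive. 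This same inequality also delivers $\limsup<\infty$ directly, so your separate Chebyshev--Widom argument for the upper bound is unnecessary.
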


While the Szeg\H{o} integral relates to relative entropy and allows for defining an outer function (see Sect.~3), the product $a_1\cdots a_n$ is the reciprocal of the leading coefficient in $P_n(x, d\mu)$. So if $\large\E$ is rescaled to have capacity $1$, the leading coefficients in the orthonormal polynomials are bounded above and below.

%\medskip

With \eqref{Sz thm} as a starting point, Hayashi \cite{MR662302} set out to generalize Szeg\H{o}'s theorem to Riemann surfaces. Naturally, such a generalization may not be as clean and explicit as \eqref{[-2,2]} or \eqref{iff}. Interestingly, the results of \cite{MR662302} simplify when the Riemann surface $\RR$ is of Parreau--Widom type. Let $d\omega_\tau$ be the harmonic measure on $\partial\RR$ for a point $\tau\in\RR$. When $h\geq 0$ belongs to $L^1(\partial\RR, d\om_\tau)$, Hayashi proved that
\begin{equation}
\label{RS}
\inf_{f\in H^\infty_\tau(\RR)} \biggl\{\int\vert 1-f\vert^p h\,d\om_\tau\biggr\} \leq
C\exp\biggl\{\int_{\partial\RR}\log h\, d\om_\tau\biggr\},
\end{equation}
where $H_\tau^\infty(\RR)$ is the set of bounded analytic functions on $\RR$ vanishing at $\tau$, and $C>0$ is some constant depending on $\tau$. Provided that $\partial\RR\subset\R$, and if the infimum on the left-hand side can be related to $a_1\cdots a_n/\ca(\partial\RR)^n$, this inequality may prove the implication `$\Rightarrow$' of \eqref{iff} when $d\mu_\s=0$. In order to be able to include a singular part of the measure, \cite{MR662302} needs an extra assumption on $\RR$ which is equivalent to the Direct Cauchy Theorem. We shall not discuss this issue here but refer the reader to \cite{MR723502} and the recent paper \cite{Y}.

\medskip

{\bf Acknowledgements:} The author would like to thank Alexandru Aleman, Katsuhiro Matsuzaki, Barry Simon, Mikhail Sodin, Peter Yuditskii, and Maxim Zinchenko for useful discussions and comments while preparing the paper.

\section{Parreau--Widom sets and uniformization theory}
\label{secPW}

In this section we start by introducing the notion of a Parreau--Widom set on the real line. This notion covers a large class of compact subsets of $\R$ and allows for a set to have infinitely many `gaps', yet no isolated points. The precise definition will be given below. It relies on potential theory and key roles will be played by the Green's function and the equilibrium measure. We refer the reader to \cite{MR2150803,MR1163828,MR0414898,MR694693,MR0279280,MR0350027} for background and more details on potential theory.

In the second part of the section, we give a brief account on uniformization theory. The universal covering map will be brought into play and we relate the Green's function to Blaschke products of the underlying Fuchsian group. See, e.g., \cite{MR1191903, MR0228671} or \cite[Chap.~9]{Simon} for further details.

\subsection{Parreau--Widom sets}
Let $\large\E\subset\R$ be a compact set. We shall always assume that the {logarithmic capacity} of $\large\E$,
denoted $\ca(\large\E)$, is positive so that the domain $\Om=\overline\C\setminus\large\E$ has a \emph{Green's function}.
For fixed $y\in\Om$, we let $g_{\,\Om}(\,\cdot\,, y)$ be the Green's function for $\Om$ with pole at $y$. Recall that
this function is positive and harmonic on $\Om\setminus\{y\}$. Moreover,
\begin{equation}
\label{green y}
g_{\,\Om}(\,\cdot\,, y)+\log\vert\cdot-y\vert
\end{equation}
is harmonic at $y$. The special case $y=\infty$ will be of particular interest to us and we write $g$ instead of $g_{\,\Om}(\,\cdot\,, \infty)$.
%Unless otherwise stated, the term `Green's function' will refer to $g$.
Not only is $g(\,\cdot\,)-\log\vert\cdot\vert$ harmonic at $\infty$, but we also have the expansion
\begin{equation}
\label{green-asymp}
g(x)=\log\vert x\vert+\ga({\large\E})+{o}(1),
\end{equation}
where $\ga(\large\E)=-\log\bigl(\ca(\large\E)\bigr)$ is the so-called {Robin's constant} for $\large\E$.

There is a unique probability measure on $\large\E$ of minimal logarithmic energy. This measure is called
the \emph{equilibrium measure} of $\large\E$ and will be denoted $d\mu_{\E}$. It minimizes the integral
\[
I(d\mu)=\int\int\log\frac{1}{\vert s-t\vert}d\mu(t)d\mu(s)
\]
among all probability measures $d\mu$ on $\large\E$ and the minimal energy is given by $I(d\mu_\E)=\ga(\large\E)$. The logarithmic potential of $d\mu_\E$ brings us back to the Green's function through the relation
\begin{equation}
\label{equi-pot}
g(x)=\ga({\large\E})-\int\log\frac{1}{\vert t-x\vert}d\mu_\E(t).
\end{equation}

Besides $\large\E$ having positive logarithmic capacity, we will assume that each point of $\large\E$ is a regular point for $\Om$, that is,
\begin{equation}
\label{reg}
\lim_{\Om\ni x\to t} g(x)=0 \, \mbox{ for all $t\in\large\E$}.
\end{equation}
This in particular means that $\large\E$ has no isolated points as such points are irregular for $\Om$. In short we say that $\large\E$ is regular when \eqref{reg} holds. Equivalent to this is the Green's function being continuous on all of $\C$.

It will often be useful to write $\large\E$ in the form
\begin{equation}
\label{E}
{\large\E}=\bigl[\al, \be\bigr]\setminus\bigcup_j\,\bigl(\al_j, \be_j\bigr),
\end{equation}
where $\cup_j$ is a countable union of disjoint open subintervals of $[\al, \be]$ and $\al<\al_i\neq\be_j<\be$ for all $i, j$.
%the union over $j$ is typically infinite.
We shall refer to $(\al_j, \be_j)$ as a `gap' in $\large\E$ and to \eqref{E} as the infinite (or finite) gap representation of $\large\E$. While the Green's function vanishes on $\large\E$ and in particular at the endpoints $\al_i$ and $\be_j$, it is strictly concave on each of the gaps. Since $g$ cannot be constant on any interval in $\R\setminus\large\E$, this follows from \eqref{equi-pot} and the fact that $\log$ is concave. Hence there is a unique point $c_j\in(\al_j, \be_j)$ at which $g$ attains its maximum on $(\al_j, \be_j)$. The $c_j$'s are the \emph{critical points} of $g$ since $g'(c_j)=0$ for each $j$ and $g'$ never vanishes outside $[\al, \be]$.

%The core of the present paper will be dealing with the following class of compact sets.
\begin{definition}
Suppose $\large\E\subset\R$ is a compact set of $\ca(\large\E)>0$ and suppose $\large\E$ is regular. We call $\large\E$ a \emph{Parreau--Widom set} if
\begin{equation}
\label{PW}
\sum_j g(c_j)<\infty,
\end{equation}
where $g$ is the Green's function for $\overline{\C}\setminus\large\E$ with pole at $\infty$ and $\{c_j\}$ are the critical points of $g$.
\end{definition}

\begin{remark}
Note that the definition is independent of which $y\in\Om$ is taken as pole of the Green's function. If the values of $g_{\,\Om}(\,\cdot\,, y)$ at critical points are small enough to be summable for one $y$, the same applies to all $y$. The choice of $y=\infty$ is made for convenience.
\end{remark}

The above terminology is inspired by the monograph \cite{MR723502} of Hasumi. In comparison, we say that $\large\E\subset\R$ is a Parreau--Widom set if the domain $\Om=\overline{\C}\setminus\large\E$ is a Riemann surface of Parreau--Widom type in the language of \cite[Chap.~5]{MR723502}. While originally introduced by Parreau in \cite{MR0098180}, Widom \cite{MR0288780} showed that such surfaces have sufficiently many analytic functions. See, e.g., \cite{MR723502} for more details.

Clearly, any finite gap set (cf. \cite{MR2582999, MR2659747, CSZ2}) is a Parreau--Widom set. But the notion goes way beyond. Jones--Marshall \cite[Sect.~3]{MR827347} proved that it includes infinite gap sets $\large\E$ which are \emph{homogeneous} in the sense of Carleson \cite{MR730079}. By definition, this means there is an $\eps>0$ such that
\begin{equation}
\label{homogeneous}
\frac{\vert (t-\de, t+\de)\cap\large\E\vert}{\de}\geq\eps \,\mbox{ for all $t\in\large\E$ and all $\de<\diam(\large\E)$.}
\end{equation}
Carleson introduced this geometric condition to avoid the possibility of certain parts of $\large\E$ to be very thin, compared to Lebesgue measure. See \cite{MR1057338, MR1022286} for further results on homogeneous sets.
\begin{example}
Remove the middle $1/4$ from the interval $[0,1]$ and continue to remove subintervals of length $1/4^n$ from the middle of each of the $2^{n-1}$ remaining intervals. Let $\E$ be the set of what is left in $[0,1]$; this is a fat Cantor set of $\vert\E\vert=1/2$. One can show that $\vert (t-\de, t+\de)\cap\E\vert\geq\de/4$ for all $t\in\E$ and all $\de<1$.
\end{example}

\subsection{Uniformization theory}
When $\large\E$ has at least one gap, the domain $\Om=\overline{\C}\setminus\large\E$ is not simply connected. So only in the trivial case of $\large\E$ being an interval, $\Om$ is conformally equivalent to the unit disk. For general Parreau--Widom sets, we shall employ uniformization theory as in the seminal paper \cite{MR1674798} of Sodin--Yuditskii. There is a map $\cm: \DD\to\Om$, which is onto but only locally one-to-one, and a Fuchsian group $\Ga$ of M\"{o}bius transformations on $\DD$ so that
\begin{equation}
\label{cover}
\cm(z)=\cm(w) \,\iff\, \exists\ga\in\Ga: \, z=\ga(w).
\end{equation}
This map is called the \emph{universal covering map} and we fix it uniquely by requiring
\begin{equation}
\label{fix x}
\cm(0)=\infty, \quad x_\infty:=\lim_{z\to 0} z\cm(z)>0.
\end{equation}
Note that $\Ga$ is isomorphic to the fundamental group $\pi_1(\Om)$ and hence a free group on as many generators as the number of gaps in $\large\E$.

Since $\ca(\large\E)>0$, it follows from a theorem of Myrberg (see, e.g., \cite[Chap.~XI]{MR0414898}) that $\Ga$ is of convergent type. This means
\[
\sum_{\ga\in\Ga}\bigl(1-\vert\ga(w)\vert\bigr)<\infty \, \mbox{ for all $w\in\DD$},
\]
and hence the Blaschke products defined by
\begin{equation}
\label{B}
B(z,w)=\prod_{\ga\in\Ga}\frac{\vert\ga(w)\vert}{\ga(w)}
\frac{\ga(w)-z}{1-\overline{\ga(w)}z}
\end{equation}
are convergent for $z, w\in\DD$. By convention, a factor in \eqref{B} reduces to $z$ if $\ga(w)=0$. Note that $B(\,\cdot\,, w)$ is analytic on $\DD$ with simple zeros at $\{\ga(w)\}_{\ga\in\Ga}$. The link back to potential theory is given by
\begin{equation}
\label{B and x}
\vert B(z, w)\vert=\exp\bigl\{-g_{\,\Om}\bigl(\cm(z),\, \cm(w)\bigr)\bigr\} \,\mbox{ for $z,w\in\DD$}.
\end{equation}
In particular, the Green's function can be written as
\begin{equation}
\label{B and Green}
g\bigl(\cm(z)\bigr)=-\log\vert B(z)\vert,
%=\sum_{\ga\in\Ga}\log\biggl\vert\frac{1-\overline{\ga(0)}z}{z-\ga(0)}\biggr\vert,
\end{equation}
where $B$ is shorthand notation for $B(\,\cdot\,,0)$. We point out that
\begin{equation}
\label{B at 0}
B(z)=\frac{\ca(\large\E)}{x_\infty}z+\mathcal{O}(z^2)
\end{equation}
near $z=0$. Since $B'(0)=\prod_{\ga\neq\id}\vert\ga(0)\vert>0$ and $\cm(z)={x_\infty}/z+\mathcal{O}(1)$ around $z=0$, this follows from \eqref{green-asymp} and \eqref{B and Green}.

%%%%%%%%%%%%%%%%%%%%%%%%%%%%%%%%%%%%%%%% m-function %%%%%%%%%%%%%%%%%%%%%%%%%%%%%%%%%%%%%%%%%%

\section{A canonical factorization of the $M$-function}
\label{secM}

Let $\large\E\subset\R$ be a Parreau--Widom set and consider a Jacobi matrix $J=\{a_n, b_n\}_{n=1}^\infty$ with $\si_\ess(J)=\large\E$. The spectrum of $J$ thus contains $\large\E$ and consists only of isolated eigenvalues outside $\large\E$. We denote these eigenvalues, if any, by $\{x_k\}$. Should there be infinitely many of them, the $x_k$'s accumulate nowhere but at some (or all) of the endpoints of $\large\E$, viz., $\al, \be$ and $\al_j, \be_j$ in the representation \eqref{E}.

Let $d\mu=f(t)dt+d\mu_\s$ be the spectral measure of $J$ and introduce the $m$-function (or Stieltjes transform of $d\mu$) by
\begin{equation}
\label{m}
m(x):=m_\mu(x)=\int\frac{d\mu(t)}{t-x}, \quad x\in\C\setminus\supp(d\mu).
\end{equation}
It is well known that $m$ is a Nevanlinna--Pick function (i.e., $m$ is analytic in $\C\setminus\R$ and $\im m(x)\gtrless 0$ for $\im x\gtrless 0$). Since $d\mu$ is compactly supported, we readily see that
\begin{equation}
\label{infinity}
m(x)=-{1}/{x}+\mathcal{O}(x^{-2})
\end{equation}
near $\infty$. In fact, one can write down the Laurent expansion of $m_\mu$ around $\infty$ in terms of the moments of $d\mu$. More importantly, the boundary values $m(t+i0):=\lim_{\eps\da 0}m(t+i\eps)$ exist for a.e.\;$t\in\R$ and
\begin{equation}
\frac{1}{\pi}\im m_\mu(t+i\eps)\,dt \xrightarrow[]{\,\,w\,\,} d\mu \,\mbox{ as } \eps\da 0.
\end{equation}
To be even more specific, we have
%\begin{equation}
%\label{ac}
$f(t)=\frac{1}{\pi}\im m_\mu(t+i0)$ a.e.\;and
\begin{equation}
\label{pp}
\mu_\s\bigl(\{t\}\bigr)=\lim_{\eps\to 0}\eps\im m_\mu(t+i\eps)
\,\mbox{ for all $t\in\R$}.
\end{equation}
The $m$-function remains analytic in the gaps of $\large\E$, and also below $\al$ and above $\be$, except at the eigenvalues $\{x_k\}$ where it has simple poles. Moreover, $m$ is real-valued and strictly increasing on any interval in $\R\setminus\si(J)$ as its derivative is $>0$ there. So the poles and zeros of $m$ interlace on each of the intervals in $\R\setminus\large\E$ and the same applies to $m(x)-a$ for any $a\in\R$.

A major role in what follows will be played by the function
%In order to make use of the theory of meromorphic functions on the unit disk, we introduce
\begin{equation}
\label{M}
M(z):=-m\bigl(\cm(z)\bigr), \quad z\in\DD.
\end{equation}
Here $\cm$ is the covering map defined in \eqref{cover}--\eqref{fix x}.
Compared to $m$, the function $M$ has the advantage of being meromorphic on $\DD$ rather than $\C\setminus\large\E$.
It follows immediately from \eqref{fix x} and \eqref{infinity} that
\begin{equation}
\label{M at infinity}
M(z)=\frac{z}{x_\infty}+\mathcal{O}(z^2)
\end{equation}
near $z=0$.
%The minus on the right-hand side of \eqref{M} ensures that $\im M(z)\gtrless 0$ for $\im z\gtrless 0$.
As a direct consequence of \eqref{cover}, $M$ is \emph{automorphic} with respect to $\Ga$ (i.e., $M\bigl(\ga(z)\bigr)=M(z)$ for every $z\in\DD$ and all $\ga\in\Ga$). The poles of $M$ are situated at the points $p\in\DD$ for which $\cm(p)\in\{x_k\}$. To better keep track of this set, we introduce a fundamental set for $\Ga$ as follows. Consider first the open set
\begin{equation}
\label{Ford}
F:=\bigl\{z\in\DD : \vert\ga'(z)\vert<1 \,\mbox{ for all } \ga\neq\id\bigr\},
\end{equation}
which is known as the Ford fundamental region. Geometrically, $F$ is the unit disk with a number of orthocircles (and their interior) removed. More precisely, one has to remove two orthocircles for each generator $\ga_j$ of $\Ga$ (or one for $\ga_j$ and one for $\ga_j^{-1}$) since the action of $\ga_j$ can be described as inversion in some orthocircle $C_j$ in the upper half-plane following complex conjugation, and $\ga_j^{-1}$ acts similarly with respect to the conjugate circle in the lower half-plane. Besides being symmetric in the real line, the set $F$ has the important properties that
\begin{enumerate}
\item[1)]
no two of its points are equivalent under $\Ga$ (i.e., if $z\in F$ then $\ga(z)\notin F$),
\item[2)]
\label{union}
$\displaystyle{\bigcup_{\ga\in\Ga} \cl\bigl(\ga(F)\bigr)=\DD}$,
where `$\cl$' refers to closure within $\DD$.
\end{enumerate}
We can preserve these two properties and even get a disjoint union in 2) without taking closure by considering
\begin{equation}
\label{F}
\F:=F\cup\bigl(\cup_j C_j\cap\DD\bigr).
\end{equation}
This is a fundamental set for $\Ga$ as it contains one and only one point of each $\Ga$-orbit.

Returning to the poles of $M$, let $p_k$ be the unique point in $\F$ such that $\cm(p_k)=x_k$. Then we can write the collection of poles as $\{\ga(p_k)\}_{k, \ga\in\Ga}$. Note that $p_k$ either belongs to $(-1,1)\setminus\{0\}$ or lies in one of the $C_j$'s, depending on whether $x_k$ is situated outside $[\al, \be]$ or contained in $(\al_j, \be_j)$ for some $j$.
The minus sign on the right-hand side in \eqref{M} ensures that $\im M(z)\gtrless 0$ when $z\in F$ and $\im z\gtrless 0$.

We now aim at establishing an all-important result about $M$, namely that it is a function of {bounded characteristic} on $\DD$ with {no} singular inner part (under a certain condition on the poles $\{x_k\}$, also known as the Blaschke condition). This result can also be found in \cite[Sect.~5]{MR1674798} but we include a complete proof of the statement here, partly due to its importance and partly to make the present paper more self-contained.
\begin{prop}
\label{bnd char}
Let $\large\E$ and $J$ be given as above. In addition, assume that the eigenvalues $\{x_k\}$ satisfy the condition
\begin{equation}
\label{ev}
\sum_k g(x_k)<\infty,
\end{equation}
where $g$ is the Green's function for $\overline{\C}\setminus\large\E$ with pole at $\infty$.
Then the function $M$ defined in \eqref{M} has bounded characteristic.
\end{prop}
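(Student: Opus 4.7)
The plan is to exhibit a bounded analytic function $B_0$ on $\DD$ whose zero set (with multiplicity) matches the pole set of $M$ exactly, and then to bound the Nevanlinna characteristic of the resulting analytic function $H:=B_0 M$. Writing $M=H/B_0$ as a quotient of two functions of bounded characteristic then gives the result.

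First I would construct $B_0$ and verify unconditional convergence. Each pole $x_k$ of $m$ in $\Om$ pulls back via $\cm$ to the $\Ga$-orbit $\{\ga(p_k)\}_{\ga\in\Ga}$ of simple poles of $M$ in $\DD$, with $p_k$ the unique representative in $\F$. Setting $z=0$ in \eqref{B} gives $B(0,w)=\prod_{\ga\in\Ga}|\ga(w)|$, and combining with \eqref{B and x} yields $\prod_\ga|\ga(w)|=\exp\{-g(\cm(w))\}$. Specializing to $w=p_k$ and using the elementary inequality $1-t\le-\log t$ for $t\in(0,1]$,
\[
\sum_{k,\ga}\bigl(1-|\ga(p_k)|\bigr)\;\le\;\sum_{k,\ga}\bigl(-\log|\ga(p_k)|\bigr)\;=\;\sum_k g(x_k)\;<\;\infty
\]
by the hypothesis \eqref{ev}. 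Hence $B_0(z):=\prod_k B(z,p_k)$ converges unconditionally to a bounded analytic function on $\DD$ with $|B_0|\le 1$ and simple zeros precisely at the $\ga(p_k)$'s. Since the poles of $M$ are also simple, $H:=B_0M$ extends to an analytic function on all of $\DD$.

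The main obstacle is to show that $H$ has bounded characteristic, i.e., $\sup_{r<1}\int_0^{2\pi}\log^+|H(re^{i\theta})|\,d\theta<\infty$. The structural input is that $-m$ is Herglotz on $\C\setminus\R$, so by the $\Ga$-automorphy of $M$ the Cayley transform $\phi:=(M-i)/(M+i)$ satisfies $|\phi|\le 1$ on each $\ga(F^+)$ and $|1/\phi|\le 1$ on each $\ga(F^-)$. This bounds $\log^+|M|$ tile-by-tile by $-\log|1-\phi|$ plus a bounded term. Assembling these tile-wise bounds into a global harmonic majorant on $\DD$ is the delicate step, and it relies on the covering-space formalism of \cite{MR1674798} together with the Parreau--Widom summability $\sum_j g(c_j)<\infty$ to control the integrals over $\Ga$-orbits. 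Combined with Jensen's formula applied to $B_0$, which furnishes the pole-counting bound $N(r,M)\le\sum_k g(x_k)<\infty$, this yields a uniform bound on the Nevanlinna characteristic $T(r,H)$ and completes the argument.
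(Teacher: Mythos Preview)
Your construction of $B_0$ is correct and the reduction to showing that $H=B_0M$ has bounded characteristic is sound. But that reduction is not progress: bounding $T(r,H)$ is exactly as hard as bounding $T(r,M)$, since the pole-counting term $N(r,M)\le\sum_k g(x_k)$ was never the obstacle. The entire difficulty lives in the proximity term $m(r,M)$, and this is precisely the step you label ``delicate'' and then do not carry out. The tile-wise Cayley bound you sketch does not assemble: $|\phi|\le 1$ holds only on the upper-half tiles and $|1/\phi|\le 1$ only on the lower-half tiles, so there is no global bounded function in sight, and ``relies on the covering-space formalism of \cite{MR1674798} together with Parreau--Widom summability'' is a citation, not an argument. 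As written, the proof is incomplete at its central point.

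The paper's proof avoids this issue entirely by working with the counting function rather than the proximity function. The key tool is Frostman's identity \eqref{Frostman}: it suffices to bound $N\bigl(1,1/(M-a)\bigr)$ uniformly for $a$ ranging over a set of positive capacity. Since $m$ is real and monotone on each interval of $\R\setminus\si(J)$, the $a$-points of $m$ interlace with its poles; pulling back via \eqref{B and Green}, one gets $N\bigl(1,1/(M+a)\bigr)=\sum_k g(x_k(a))$, and interlacing together with concavity of $g$ on gaps bounds this by $\sum_j g(c_j)+\sum_k g(x_k)+C$ uniformly for $a\in[1,2]$. This is a short, explicit argument that exploits the real-line structure of $m$ directly, whereas your approach tries to extract everything from the Herglotz sign pattern alone, which does not by itself control the size of $|M|$ near $\partial\DD$.
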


\begin{remark}
The condition \eqref{ev} is equivalent to
\begin{equation}
\label{ev y}
\sum_k g_{\,\Om}(x_k,y)<\infty \,\mbox{ for all $y\in\overline{\C}\setminus\si(J)$}.
\end{equation}
For by \eqref{B and Green}, it implies
\[
\prod_{k,\ga\in\Ga}\vert\ga(p_k)\vert=\prod_k\vert B(p_k)\vert>0,
\]
so that $\prod_k B(z,p_k)$ converges to an analytic function on $\DD$ with simple zeros at $\{\ga(p_k)\}_{k,\ga\in\Ga}$. As none of these zeros belong to $\F\setminus\{p_k\}$, the product $\prod_k\vert B(p_k,z)\vert$ is $>0$ there. Hence \eqref{ev y} follows from \eqref{B and x}.
Since every compact set $K\subset\C\setminus\si(J)$ is the image (under $\cm$) of a compact subset of $\F\setminus\{p_k\}$, we have a uniform bound of the form
\begin{equation}
\label{ev uniform}
\sum_k g_{\,\Om}(x_k,y)\leq C \,\mbox{ for all $y\in K$}.
\end{equation}
\end{remark}
Before the proof, let us briefly recall the notion of \emph{bounded characteristic} (see, e.g., \cite{MR0164038} or \cite{MR0279280}). For a meromorphic function $h$, one defines the proximity function by
\[
m(r,h)=\int_0^{2\pi}\log^+\vert h(re^{i\theta})\vert\,\frac{d\theta}{2\pi}
\]
and the counting function by
\[
N(r,h)=\int_0^r \frac{n(t, h)}{t}\,dt,
\]
where $n(t, h)$ is the number of poles of $h$ in $\vert z\vert<t$ (counted with multiplicity). If these poles are denoted $\{p_k\}$, the above integral can also be written as
\begin{equation}
\label{N}
N(r,h)=\sum_k\log\frac{r}{\vert p_k\vert}.
\end{equation}
The sum
\begin{equation}
\label{T}
T(r):=T(r,h)=m(r,h)+N(r,h)
\end{equation}
is called the characteristic function of $h$ and if $\lim_{r\up 1} T(r)<\infty$, we say that $h$ has bounded characteristic in $\vert z\vert<1$.

When $z=0$ is not a pole of $h$, the Cartan identity states that
\[
T(r)=\int_0^{2\pi} N\left(r,\frac{1}{h-e^{i\theta}}\right)\frac{d\theta}{2\pi}+\log^+\vert h(0)\vert.
\]
This formula, obtained by applying Jensen's formula to $h(z)-e^{i\theta}$ and integrating over the unit circle, becomes very useful when the solutions to $h(z)=e^{i\theta}$ are under control for all $\theta$. In a similar way and by use of potential theory, Frostman \cite{Frostman} established the more general estimate
\begin{equation}
\label{Frostman}
T(r)=\int_K N\bigl(r,{1}/({h-a})\bigr)\,d\mu_K(a)+\mathcal{O}(1) \,\mbox{ as $r\to 1$,}
\end{equation}
valid for any set $K\subset\C$ of $\ca(K)>0$ and where $d\mu_K$ is the equilibrium measure of $K$ (see \cite[Chap.~6]{MR0279280}). This estimate in particular tells us that if $N\bigl(r,\frac{1}{h-a}\bigr)\leq C$ for $r<1$ and all $a$ in a set of positive logarithmic capacity, then $T(r)$ is bounded.

\begin{proof}[Proof of Proposition]
The plan is to show that $N\bigl(1,\frac{1}{M-a}\bigr)\leq C$ for all $a$ in some interval of the real line. Given $a\in\R$, let $\{p_k(a)\}$ be the unique points in $\F$ such that $\{\cm(p_k(a))\}$ are the $a$-points of $m$ in $\overline{\R}\setminus\large\E$ (i.e., the solutions of $m(x)=a$). Clearly, the points $x_k(a):=\cm(p_k(a))$ interlace with the poles of $m$ and the collection $\{\ga(p_k(a))\}_{k, \ga\in\Ga}$ accounts for all the $a$-points of $-M$ in $\DD$. Recalling \eqref{N} and \eqref{B and Green}, we have
\[
N\left(1,\frac{1}{M+a}\right)=\sum_{k, \ga\in\Ga}\log\frac{1}{\left\vert\ga\bigl(p_k(a)\bigr)\right\vert}=
-\sum_k\log\left\vert B\bigl(p_k(a)\bigr)\right\vert=\sum_k g\bigl(x_k(a)\bigr).
\]
Hence the task is reduced to dealing with values of the Green's function. Due to interlacing, we immediately see that
\begin{equation}
\label{in gaps}
\sum_{k:\, x_k(a)\in(\al_j,\be_j)}g\bigl(x_k(a)\bigr)\leq g(c_j)+\sum_{k:\, x_k\in(\al_j,\be_j)}g(x_k).
\end{equation}
So the $a$-points in gaps of $\large\E$ do not present any problems. Outside $[\al, \be]$ most of the $a$-points interlace with poles too. But for $a>0$, there may be an $a$-point ($x_{\min}(a)$, say) of $m$ below the smallest eigenvalue of $J$ and for $a<0$, there may be an $a$-point above the largest eigenvalue. However that may be, we always have a pointwise (in $a\neq 0$) estimate of the form
\begin{equation}
\label{above and below}
\sum_{k:\, x_k(a)\in\R\setminus[\al,\be]}g\bigl(x_k(a)\bigr)\leq C(a)+\sum_{k:\, x_k\in\R\setminus[\al,\be]}g(x_k)
\end{equation}
for some constant $C:=C(a)$ depending on $a$. For the choice of $C=g(x_{\min}(1))$, the estimate holds uniformly for $a\geq 1$. Combining \eqref{in gaps} and \eqref{above and below}, we thus arrive at
\[
N\left(1,\frac{1}{M+a}\right)\leq\sum_jg(c_j)+\sum_kg(x_k)+C,
\]
valid for $a\in[1,2]$, say. This completes the proof.
\end{proof}

As a function of bounded characteristic, $M$ has angular boundary values $M(e^{i\theta})$ a.e.\;on the unit circle and admits a factorization of the form
\begin{equation}
\label{factor}
M(z)=\frac{\pi_1(z)}{\pi_2(z)}\exp\biggl\{\int_0^{2\pi}\frac{e^{i\theta}+z}{e^{i\theta}-z}
\log\bigl\vert M(e^{i\theta})\bigr\vert
\frac{d\theta}{2\pi}+\int_0^{2\pi}\frac{e^{i\theta}+z}{e^{i\theta}-z}d\rho(\theta)\biggr\},
\end{equation}
where $\pi_1$, $\pi_2$ are Blaschke products corresponding to zeros and poles of $M$, and $d\rho$ is a singular measure on $\partial\DD$.
%In particular, $\log\vert M(e^{i\theta})\vert$ belongs to $L^1(\partial\DD)$.
Naturally, $M(e^{i\theta})$ coincide a.e.\;with $m(t+i0)$ for suitable $t\in\large\E$. As $e^{i\theta}$ runs through $\partial\F:=\overline{\F}\cap\partial\DD$, the corresponding values of $t$ cover $\large\E$ precisely twice and the same applies when $e^{i\theta}$ traverses $\ga(\overline{\F})\cap\partial\DD$ for arbitrary $\ga\in\Ga$.
With reference to Pommerenke \cite{MR0466534}, see also \cite{MR1126158}, we have
%the underlying assumption of $\large\E$ being a Parreau--Widom set implies that
$\sum_{\ga\in\Ga}\vert\ga(\overline{\F})\cap\partial\DD\vert=2\pi$ (since $\large\E$ is a Parreau--Widom set) and
\[
\int_0^{2\pi}h\bigl(\cm(e^{i\theta})\bigr)\frac{d\theta}{2\pi}=
\int_{\partial\F} h\bigl(\cm(e^{i\theta})\bigr)
\sum_{\ga\in\Ga}\bigl\vert\ga'(e^{i\theta})\bigr\vert\frac{d\theta}{2\pi}
\]
whenever $h\circ\cm$ is integrable on $\partial\DD$. Moreover, by preservation of the equilibrium measure under the covering map (see, e.g., \cite[Chap.~3]{MR723502} or \cite[Chap.~2]{MR694693}),
\begin{equation}
\label{preserve equi}
\int_\E h(t)\,d\mu_\E(t)=\int_0^{2\pi}h\bigl(\cm(e^{i\theta})\bigr)\frac{d\theta}{2\pi}
\end{equation}
for every $h\in L^1(\large\E, d\mu_\E)$.

The exponential of the integral in \eqref{factor} that involves $\log\vert M(e^{i\theta})\vert$ is called the \emph{outer part} of $M$ while $\exp$ of the second integral is referred to as the \emph{singular inner part} of $M$. The following result will be crucial to us.
\begin{theorem}
\label{no singular inner part}
In the setting of Prop.~\ref{bnd char}, the function $M$ has no singular inner part. In other words, it can be factorized as
\begin{equation}
\label{fact}
M(z)=B(z)\prod_k\frac{B(z,z_k)}{B(z,p_k)}\exp
\biggl\{\int_0^{2\pi}\frac{e^{i\theta}+z}{e^{i\theta}-z}
\log\bigl\vert M(e^{i\theta})\bigr\vert\frac{d\theta}{2\pi}\biggr\},
\end{equation}
where $z_k$ and $p_k$ belong to the fundamental set $\F$ and are chosen in such a way that $\{\cm(z_k)\}$ and $\{\cm(p_k)\}$ are the
%sets of
zeros and poles of $m$ in $\R\setminus\large\E$.
\end{theorem}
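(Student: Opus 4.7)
By Proposition~\ref{bnd char}, $M$ has bounded characteristic on $\DD$ and therefore admits the canonical factorization \eqref{factor}, with a Blaschke product $\pi_1$ for its zeros, a Blaschke product $\pi_2$ for its poles, an outer factor built from $\log|M(e^{i\theta})|$, and a singular inner factor coming from a signed singular measure $d\rho$ on $\partial\DD$. My plan is to identify $\pi_1$ and $\pi_2$ explicitly, to show that $d\rho\equiv 0$, and finally to pin down the unimodular constant.

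I would first locate the zeros and poles of $M$. From \eqref{M at infinity}, $M$ has a simple zero at $z=0$, and by $\Ga$-automorphy the full orbit of $0$ consists of simple zeros; the associated Blaschke product is $B(z)$, whose convergence is equivalent via \eqref{B and x}--\eqref{B and Green} to the Parreau--Widom summability \eqref{PW}. The remaining zeros of $M$ come from zeros of $m$ on $\R\setminus\E$, which interlace with the eigenvalues $\{x_k\}$ on every component of $\R\setminus\E$. Writing $\{z_k\}\subset\F$ for the lifts of these zeros and $\{p_k\}\subset\F$ for the lifts of $\{x_k\}$, the interlacing combined with \eqref{ev} yields $\sum_k g(\cm(z_k))<\infty$ alongside the assumed $\sum_k g(x_k)<\infty$, and \eqref{ev uniform} with \eqref{B and x} then gives uniform convergence on compacta of $\prod_k B(z,z_k)$ and $\prod_k B(z,p_k)$. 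Thus $\pi_1=B\cdot\prod_k B(\,\cdot\,,z_k)$ and $\pi_2=\prod_k B(\,\cdot\,,p_k)$.

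With these identifications in hand, the auxiliary function $N:=M\pi_2/\pi_1$ is analytic and nowhere vanishing on $\DD$, $\Ga$-automorphic, of bounded characteristic, and satisfies $|N(e^{i\theta})|=|M(e^{i\theta})|$ almost everywhere; its factorization therefore reduces to an outer factor multiplied by $\exp\bigl\{\int\frac{e^{i\theta}+z}{e^{i\theta}-z}\,d\rho(\theta)\bigr\}$. To show $d\rho\equiv 0$, I would exploit the Herglotz character of $-m\colon\C_\pm\to\C_\mp$ together with the observation that, since $\C_+$ is simply connected and disjoint from $\E$, the universal cover $\cm$ restricts to a conformal bijection between each connected component of $\cm^{-1}(\C_+)$ and $\C_+$ (and similarly for $\C_-$). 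Hence $M$ is Herglotz on every such sheet, and its transplant to $\DD$ lies in $N^+$ and carries no singular inner mass on the arc of $\partial\DD$ that bounds that sheet. Combined with the $\Ga$-invariance of $d\rho$ forced by automorphy of $N$, and the identity $\sum_\ga|\ga(\overline{\F})\cap\partial\DD|=2\pi$ recalled just after \eqref{factor} (which holds precisely because $\E$ is Parreau--Widom), these arcs exhaust $\partial\DD$ up to a null set, so $d\rho$ must vanish.

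Once $d\rho=0$ is established, only the unimodular constant remains, and it is fixed by matching Taylor expansions at $z=0$: \eqref{M at infinity} gives $M(z)\sim z/x_\infty$, \eqref{B at 0} gives $B(z)\sim (\ca(\E)/x_\infty)z$, and since $B(0,z_k)$ and $B(0,p_k)$ are real positive by the normalization in \eqref{B} and the outer exponential is real positive at $z=0$, the constant must equal $1$. The main obstacle will clearly be the vanishing of $d\rho$: one has to weave together the sheet-wise Herglotz property of $-m\circ\cm$, the $\Ga$-automorphy of $N$, and the full-measure tiling of $\partial\DD$ by $\Ga$-translates of $\partial\F\cap\partial\DD$, where the Parreau--Widom hypothesis enters precisely through this tiling identity.
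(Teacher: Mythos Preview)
Your identification of the Blaschke factors is essentially correct (though the convergence of $\prod_k B(\,\cdot\,,z_k)$ also needs $\sum_j g(c_j)<\infty$, not only \eqref{ev}, since a gap may contain one more zero of $m$ than poles). The difficulty is entirely in your argument that $d\rho\equiv 0$, and there it has a genuine gap. Granting that $M$ restricted to each component $U$ of $\cm^{-1}(\C_\pm)$ is Herglotz and hence outer \emph{as a function on $U$}, this outerness is relative to a uniformizer $\psi:\DD\to U$, and you give no mechanism to convert it into the vanishing of the singular measure $d\rho$ of $M$ \emph{on $\partial\DD$} along the arc $\partial U\cap\partial\DD$; the boundary of $U$ contains curves interior to $\DD$ as well, and the inner--outer factorization does not localize in the way you suggest. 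More seriously, even if one could show that $d\rho$ vanishes on every arc $\ga(\overline\F)\cap\partial\DD$, the Pommerenke identity you invoke only says that these arcs fill $\partial\DD$ up to \emph{Lebesgue} measure zero. The complement is the limit set $\Lambda(\Ga)$, and a singular measure can perfectly well be supported there; $\Ga$-invariance of $d\rho$ does not help, because $\Lambda(\Ga)$ is itself $\Ga$-invariant. So your scheme, as written, cannot rule out singular mass on $\Lambda(\Ga)$.

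The paper avoids this obstruction by never localizing to arcs. It works instead with the exponential Herglotz representation $m_\eps=|m_\eps(i)|\exp\{\int(\frac{1}{t-x}-\frac{t}{t^2+1})\xi(t)\,dt\}$, splits the integral over $\E$ and over $\R\setminus\E$, and shows directly that each piece, lifted by $\cm$, is the Poisson integral of its boundary values on all of $\partial\DD$. For the piece over $\E$ this comes from an $H^p$ bound via M.~Riesz' theorem; for the piece over $\R\setminus\E$ it comes from a delicate approximation by bounded harmonic functions built out of finite products and Green's functions, where the Parreau--Widom condition enters through uniform control of $\sum_j g_\Om(c_{x,j},x)$ rather than through the tiling identity. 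This global approach is what lets the paper conclude $d\rho=0$ without ever having to account for the limit set.
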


\begin{proof}
We start by showing that $M-\eps$ has no singular inner part for $\eps>0$. The result will then follow taking $\eps\downarrow 0$.
Given $\eps>0$, write $m_\eps:=m+\eps$ in the form
\[
m_\eps(x)=\vert m_\eps(i)\vert\exp\biggl\{\int_\R\biggl(\frac{1}{t-x}-\frac{t}{t^2+1}\biggr)\xi(t)dt\biggr\},
\]
where $\xi$ is defined a.e.\;on $\R$ by $\xi(t)=\frac{1}{\pi}\Arg m_\eps(t+i0)$. The trick is to split the integral into two parts, namely $i)$ $\int_\E$ and $ii)$ $\int_{\R\setminus\large\E}$. We consider each of the two parts separately.
\newline
\newline
$i) \quad \displaystyle{\check{m}(x)=\exp\biggl\{\int_\E\biggl(\frac{1}{t-x}-\frac{t}{t^2+1}\biggr)\xi(t)dt\biggr\}}$
\newline
\newline
Since $\large\E$ is compact, the behaviour of $m_\E$ is controlled by the function
\[
\varphi(x):=\int_\E\frac{\xi(t)}{t-x}dt.
\]
As a Stieltjes transform, $\varphi$ is holomorphic in $\C\setminus\large\E$ and vanishes at $\infty$. Hence $\varphi\circ\cm$ is analytic on $\DD$ and in order to show that $\exp\{\varphi\circ\cm\}$ has no singular inner part, it suffices to prove that $\varphi\circ\cm$ belongs to the Hardy space $H^1$. For every $f\in H^1$ has a complex Poisson representation of the form
\[
f(z)=i\im f(0)+\int_0^{2\pi}\frac{e^{i\theta}+z}{e^{i\theta}-z}\re f(e^{i\theta})\frac{d\theta}{2\pi},
\quad z\in\DD.
\]
A simple computation shows that
\[
\vert\im\varphi(x)\vert\leq\vert\im x\vert\int_\E\frac{\xi(t)}{\vert x-t\vert^2}dt
\leq\vert\im x\vert\int_\R\frac{dt}{\vert x-t\vert^2}\leq \pi
\]
since $0\leq \xi(t)\leq 1$. So the imaginary part of $\varphi\circ\cm$ is bounded and by M.~Riesz' theorem on conjugate functions (see, e.g., \cite[Chap.~17]{MR924157}), this implies that $\varphi\circ\cm\in H^p$ for all $p<\infty$.
\newline
\newline
$ii) \quad \displaystyle{\hat{m}(x)=\exp\biggl\{\int_{\R\setminus\large\E}\biggl(\frac{1}{t-x}-\frac{t}{t^2+1}\biggr)\xi(t)dt\biggr\}}$
\newline
\newline
Since $m_\eps$ is real-valued (on $\R$) away from $\si(J)$, the function $\xi$ only takes the values $0$ and $1$ in $\R\setminus\large\E$ (except at poles and zeros where it is not defined). More precisely, $\xi=1$ on every interval of the form $(x_k, y_k)$, where $x_k$ is a pole and $y_k$ the following zero or some $\be_j$, whichever comes first. Furthermore, if $\lim_{t\downarrow\al_i}m_\eps(t)<0$ for some $i$, then $\xi=1$ on the interval $(\al_i, y)$, where $y$ is the first zero after $\al_i$ (or some $\be_j$). Otherwise $\xi=0$, and this in particular means that $\xi$ vanishes below $x_-$ (= the minimum of $\al$ and the smallest pole) and above $y_+$ (= the maximum of $\be$ and the largest zero). Hence it suffices to consider the function
\begin{equation}
\label{psi defn}
\psi(x):=\exp\biggl\{\int_{I\setminus\large\E}\frac{\xi(t)}{t-x}dt\biggr\}, \quad I:=[x_-, y_+].
\end{equation}

Let $P:=\{x_k\}$ denote the set of poles of $m$. In case these poles accumulate at all endpoints of $\large\E$, we can write $\psi$ as \begin{equation}
\label{psi}
\psi(x)=\prod_k\frac{x-y_k}{x-x_k},
\end{equation}
where $y_k$ by definition is the first zero to the right of $x_k$. The behaviour of $m$ at $\infty$ ensures that
\begin{enumerate}
\item[1)]
no zero can come before the smallest pole in $(-\infty, \al)$,
\item[2)]
there will always be a zero after the largest pole in $(\be, \infty)$.
\end{enumerate}
The representation in \eqref{psi} remains valid in general if we abuse notation and allow for certain $x_k$'s and $y_k$'s to coincide with suitable $\al_i$'s or $\be_j$'s. In all circumstances, the interval $(x_k, y_k)$ is either contained in a gap of $\large\E$ or in $\R\setminus[\al,\be]$. So each factor in \eqref{psi} -- and hence the full product -- is positive on $\large\E$, except perhaps at certain endpoints where it vanishes or is not defined. Naturally, the $x_k$'s can be ordered and we set
\begin{equation}
\label{psi n}
\psi_n(x)=\prod_{k\leq n}\frac{x-y_k}{x-x_k}.
\end{equation}
Since $\sum_k (y_k-x_k)\leq y_+-x_-$, the finite product in \eqref{psi n} converges uniformly to $\psi(x)$ on compact subsets of $\C\setminus \overline{P}$.

The strategy for showing that $\hat{m}\circ\cm$ has no singular inner part is as follows. We know that $\psi\circ\cm$ admits a factorization of the form \eqref{factor}, involving a ratio of Blaschke products and a singular measure $d\rho$. Move the Blaschke products to the left-hand side, take the logarithm and compare the real-parts. Because of \eqref{B and x}, we get
\begin{multline}
\label{factor psi}
\qquad\log\bigl\vert\psi\bigl(\cm(z)\bigr)\bigr\vert+
\sum_k\Bigl(g_{\,\Om}\bigl(\cm(z), y_k\bigr)-g_{\,\Om}\bigl(\cm(z), x_k\bigr)\Bigr)\\=
\int_0^{2\pi}\frac{1-\vert z\vert^2}{\vert e^{i\theta}-z\vert^2}
\log\bigl\vert\psi\bigl(\cm(e^{i\theta})\bigr)\bigr\vert\,\frac{d\theta}{2\pi}+
\int_0^{2\pi}\frac{1-\vert z\vert^2}{\vert e^{i\theta}-z\vert^2}\,d\rho(\theta), \quad z\in\DD.
\qquad
\end{multline}
The goal is to show that the harmonic function on the left-hand side is the Poisson integral of its boundary values (i.e., the
first integral on the right-hand side since $g_{\,\Om}(\,\cdot\,,y)$ vanishes on $\large\E$ for all $y\in\Om$). This clearly implies $d\rho$ to be the trivial measure. As it seems hard to tell whether the harmonic function in question is the real part of an $H^p$-function for suitable $p>1$, we proceed by approximation.

For $0<\de_1, \de_2<1$, consider the function
\begin{equation}
\label{psi n12}
\phi_{n;\,\de_1,\de_2}(x)=\log\biggl\vert\frac{\de_1+\psi_n(x)}{1+\de_2\psi_n(x)}\biggr\vert+
\sum_{k\leq n}\Bigl( g_{\,\Om}\bigl(x, y_k(\de_1,n)\bigr)-g_{\,\Om}\bigl(x, y_k(1/\de_2,n)\bigr)\Bigr),
\end{equation}
where $\{y_k(\de,n)\}$ are the zeros of $\de+\psi_n(\,\cdot\,)$. It is clear that $y_k(\de,n)\in(x_k,y_k)$ for all $k$, and $y_k(\de_1,n)\nearrow y_k$ as $\de_1\downarrow 0$ while $y_k(1/\de_2,n)\searrow x_k$ as $\de_2\downarrow 0$.
Since
\[
\frac{\de_1+\psi_n(x)}{1+\de_2\psi_n(x)}=\frac{1+\de_1}{1+\de_2}
\prod_{k\leq n}\frac{x-y_k(\de_1,n)}{x-y_k(1/\de_2,n)},
\]
we see from \eqref{green y} that $\phi_{n;\,\de_1,\de_2}$ is harmonic on $\Om$ and continuous throughout $\overline{\C}$. Hence its maximum and minimum is assumed on $\large\E$ ($=$ the boundary of $\Om$). The M\"{o}bius transformation $z\mapsto (\de_1+z)/(1+\de_2z)$ maps $\R_+$ onto the interval $(\de_1, 1/\de_2)$ and since $g_{\,\Om}\bigl(\,\cdot\,,y_k(\de,n)\bigr)$ vanishes on $\large\E$, we therefore have
\begin{equation}
\label{n bound}
\log(\de_1)\leq\phi_{n;\,\de_1,\de_2}(x)\leq\log(1/\de_2), \quad x\in\Om.
\end{equation}

The uniform convergence of $\psi_n$ implies (by Hurwitz's theorem) that $y_k(\de, n)\to y_k(\de)$ as $n\to\infty$, where $\{y_k(\de)\}$ are the zeros of $\de+\psi(\,\cdot\,)$. We claim that $\phi_{n;\,\de_1,\de_2}$ converges locally uniformly on $\Om$ to
\begin{equation}
\label{psi d12}
\phi_{\de_1,\de_2}(x)=\log\biggl\vert\frac{\de_1+\psi(x)}{1+\de_2\psi(x)}\biggr\vert+
\sum_k\Bigl( g_{\,\Om}\bigl(x, y_k(\de_1)\bigr)-g_{\,\Om}\bigl(x, y_k(1/\de_2)\bigr)\Bigr).
\end{equation}
It suffices to consider compact sets $K\subset\Om$ for which the intersection $K\cap(\R\setminus\large\E)$ is a closed interval, $L=[a,b]$ say. There are only finitely many $y_k(\de)$'s in this interval and if none of them are endpoints of $L$ (i.e., $=a$ or $b$), we have precisely the same number of $y_k(\de, n)$'s in $L$ for $n$ large enough (again, by Hurwitz's theorem). Clearly,
\begin{equation}
\label{in L}
\sum_{k:\, y_k(\de, n)\in L}
\Bigl( g_\Om\bigl(x, y_k(\de, n)\bigr)+\log\bigl\vert x-y_k(\de, n)\bigr\vert \Bigr)
\end{equation}
is bounded on $K$, uniformly in $n$, and the claim will follow by dominated convergence if we can find $C>0$ such that
\[
\sum_{k:\, y_k(\de, n)\notin L} g_\Om\bigl(x, y_k(\de, n)\bigr)\leq C
\]
for all $x\in K$ and $n$ sufficiently large. By concavity of the Green's function and with $\{c_{x, j}\}$ the critical points of $g_\Om(\,\cdot\,,x)$, it follows that
\begin{equation}
\label{not in L}
\sum_{k:\, y_k(\de, n)\notin L} g_\Om\bigl(x, y_k(\de, n)\bigr)
\leq g_\Om(a-\eta, x)+g_\Om(b+\eta, x)+\sum_{k:\, x_k\notin L}g_\Om\bigl(x_k, x\bigr)+\sum_{j:\, c_j\notin L}g_\Om\bigl(c_{x, j}, x\bigr)
\end{equation}
for $\eta>0$ small and $n$ sufficiently large. We thus get the desired $C$ on the lines of the remark after Prop.~\ref{bnd char}.

The estimate \eqref{n bound} continues to hold in the limit $n\to\infty$ so that $\phi_{\de_1,\de_2}\circ\cm$ is a bounded harmonic function on $\DD$. Hence it can be written as the Poisson integral of its boundary values, that is,
\begin{equation}
\label{Poisson}
\phi_{\de_1, \de_2}\bigl(\cm(z)\bigr)=
\int_0^{2\pi}\frac{1-\vert z\vert^2}{\vert e^{i\theta}-z\vert^2}
\log\biggl\vert\frac{\de_1+\psi(\cm(e^{i\theta}))}{1+\de_2\psi(\cm(e^{i\theta}))}\biggr\vert
\frac{d\theta}{2\pi}, \quad z\in\DD.
\end{equation}
All that remains is now to let $\de_2\da 0$ and then $\de_1\da 0$ in \eqref{Poisson} and \eqref{psi d12}.
Recalling that $\psi\geq 0$ a.e.\,on $\large\E$, we get by monotone convergence that
\[
\lim_{\de_1\da 0}\lim_{\de_2\da 0}\,\phi_{\de_1, \de_2}\bigl(\cm(z)\bigr)=
\int_0^{2\pi}\frac{1-\vert z\vert^2}{\vert e^{i\theta}-z\vert^2}
\log\bigl\vert\psi\bigl(\cm(e^{i\theta})\bigr)\bigr\vert\frac{d\theta}{2\pi},
\quad z\in\DD.
\]
Since $y_k(\de)$ converges to $y_k$ as $\de\da 0$ and to $x_k$ as $\de\uparrow\infty$, it follows from
\eqref{psi d12} that
\[
\lim_{\de_1\da 0}\lim_{\de_2\da 0}\,\phi_{\de_1, \de_2}(x)=
\log\vert\psi(x)\vert+\sum_k \bigl( g_{\,\Om}(x, y_k)-g_{\,\Om}(x, x_k)\bigr),
\quad x\in\Om
\]
if we use dominated convergence as above. In conclusion, $\psi\circ\cm$ has no singular inner part.

For $\eps>0$ small enough, the largest zero of $m_\eps$ lies to the right of $\beta$ and when $\eps\da 0$, it converges to $\infty$. We therefore get the factor $B(z)$ in \eqref{fact}.
\end{proof}

Along the lines of \cite{MR1999923, MR2083307} we shall now rewrite \eqref{fact} to a nonlocal step-by-step sum rule and
introduce first some notation. Let $J_n$ be the $n$ times stripped Jacobi matrix (i.e., the matrix obtained from $J$ by removing the first $n$ rows and columns) and denote by $d\mu_n=f_n(t)dt+d\mu_{n,\s}$ its spectral measure. In particular, $J_1=\{a_{n+1}, b_{n+1}\}_{n=1}^\infty$ and we let $m_1$ be the associated $m$-function. More generally, $m_n$ denotes the $m$-function for $J_n$ (or Stieltjes transform of $d\mu_n$) and $M_n$ is short for $-m_n\circ\cm$.
Furthermore, we use $\{x_{n,k}\}$ to denote the eigenvalues of $J_n$ (or poles of $m_n$) in $\R\setminus\large\E$ and write $p_{n,k}$ for the points in $\F$ for which $\cm(p_{n,k})=x_{n,k}$.

Related to coefficient stripping is the Stieltjes expansion
\begin{equation}
\label{contfrac}
{m(z)}=\frac{1}{-z+b_1-a_1^2m_1(z)}
\end{equation}
which by iteration leads to a continued fraction representation of $m$. As a direct consequence of \eqref{contfrac}, we see that the zeros of $m$ coincide with the poles of $m_1$. Moreover, taking boundary values of the imaginary parts and recalling that $m(t+i0)\neq 0$ a.e., we get that
\[
\frac{\im m(t+i0)}{\vert m(t+i0)\vert^2}=a_1^2 \im m_1(t+i0)\,\mbox{ for a.e.\;$t\in\R$}.
\]
Pulled back to $\partial\DD$, this means
\[
a_1^2\vert M(e^{i\theta})\vert^2=\frac{\im M(e^{i\theta})}{\im M_1(e^{i\theta})}\,\mbox{ for a.e.\;$\theta$},
\]
provided that $\im M_1(e^{i\theta})\neq 0$ a.e.\;or, equivalently, $\im M(e^{i\theta})\neq 0$ a.e.\;on $\partial\DD$. When the set $\{\theta:\im M(e^{i\theta})\neq 0\}$ has full measure (i.e., $f(t)>0$ for a.e.\;$t\in\large\E$), we can therefore write \eqref{fact} in the form
\begin{equation}
\label{fact M}
{a_1M(z)}=B(z)\prod_k\frac{B(z,p_{1,k})}{B(z,p_k)}
\exp\biggl\{\frac{1}{2}\int_0^{2\pi}\frac{e^{i\theta}+z}{e^{i\theta}-z}
\log\biggl(\frac{\im M(e^{i\theta})}{\im M_1(e^{i\theta})}\biggr)\frac{d\theta}{2\pi}\biggr\}.
\end{equation}
This representation, relating $M$ and $M_1$, provides us with step-by-step sum rules. For our purpose it suffices to compare the constant terms.

If we divide by $B(z)$ in \eqref{fact M} and let $z\to 0$, the left-hand side reduces by \eqref{M at infinity} and \eqref{B at 0} to $a_1/\ca(\large\E)$. According to \eqref{B and Green}, the logarithm of the Blaschke product (over $k$) on the right-hand side simplifies to $\sum_k\bigl(g(x_k)-g(x_{1,k})\bigr)$ and the integral becomes $\int_\E\log(f/f_1)d\mu_\E$, using \eqref{preserve equi}. So we end up with
\begin{equation}
\label{step-by-step}
\log\Bigl(\frac{a_1}{\ca(\large\E)}\Bigr)=
\sum_k\bigl(g(x_k)-g(x_{1,k})\bigr)+
\frac{1}{2}\int_\E\log\biggl(\frac{f(t)}{f_1(t)}\biggr)d\mu_\E(t).
\end{equation}
Iteration now leads to
\begin{equation}
\label{n step}
\log\Bigl(\frac{a_1\cdots a_n}{\ca(\large\E)^n}\Bigr)=
\sum_k\bigl(g(x_k)-g(x_{n,k})\bigr)+
\frac{1}{2}\int_\E\log\biggl(\frac{f(t)}{f_n(t)}\biggr)d\mu_\E(t),
\end{equation}
provided that either $\log f$ or $\log f_n$ is integrable with respect to $d\mu_\E$. The underlying assumption %$\sum_k g(x_k)<\infty$
\eqref{ev} automatically implies that $\sum_k g(x_{n,k})<\infty$ for all $n$ (cf. Prop.~\ref{bound on ev}).

\section{Preparatory results}
\label{secP}

In this section we present the last results needed to prove our main theorem.   %, Thm. \ref{main thm}.
Throughout the section, $\large\E$ denotes a Parreau--Widom set of positive Lebesgue measure.
We start by showing that the equilibrium measure of $\large\E$ is absolutely continuous with respect to Lebesgue measure. This enables us to relate the Szeg\H{o} integral to relative entropies. Then we establish upper bounds for eigenvalue sums like the one in \eqref{ev}, but now for Jacobi matrices that are different from -- and yet related to -- the original $J$.

\subsection{Absolute continuity of $d\mu_\E$}
We shall prove the following result:
\begin{prop}
\label{equi ac}
The equilibrium measure $d\mu_\E$ is absolutely continuous with respect to Lebesgue measure.
\end{prop}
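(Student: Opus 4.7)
The plan is to transfer absolute continuity from $\E$ to $\partial\DD$ via the uniformization, and then produce a Radon--Nikodym density by exhibiting a finite nonzero angular derivative of the cover map at a.e.\ boundary point.

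By \eqref{preserve equi}, $d\mu_\E$ coincides with the pushforward of the Haar measure $d\theta/(2\pi)$ on $\partial\DD$ under the boundary correspondence induced by $\cm$. Absolute continuity of $d\mu_\E$ with respect to Lebesgue measure on $\E$ is therefore equivalent to: for every Borel $N\subset\E$ with $\vert N\vert =0$, the preimage $\{\theta\in [0,2\pi):\cm(e^{i\theta})\in N\}\subset\partial\DD$ has Lebesgue measure zero. The $\Ga$-translates $\ga(\partial\F)\cap\partial\DD$ partition $\partial\DD$ up to a null set, and each is mapped two-to-one onto $\E$ by $\cm$ (the upper and lower sheets); so it suffices to localize the argument to the upper arc of $\partial\F\cap\partial\DD$, on which $\cm$ is a bijection onto $\E$.

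The core step is to prove that $\cm'(e^{i\theta_0})$ exists as a finite, nonzero angular limit for a.e.\ $e^{i\theta_0}$ on this arc. Two ingredients combine. First, \eqref{PW} together with Myrberg's theorem makes $B$ a convergent Blaschke product, and through \eqref{B and x} controls $\vert\cm\vert$ near $\partial\DD$; after absorbing the single pole of $\cm$ at $0$ in $\F$, the product $B(z)z\cm(z)$ is bounded analytic on $\DD$, so $\cm$ is of bounded characteristic. Second, the hypothesis $\vert\E\vert>0$ and the symmetry $\cm(\bar z)=\overline{\cm(z)}$ place $\cm$, restricted to the upper half-disk, in a Julia--Carath\'eodory/F.\,and M.\,Riesz setting: a Nevanlinna-class map into the upper half-plane whose boundary values are real on a set of positive measure. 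The resulting rigidity forces finite non-vanishing angular derivatives almost everywhere, and a change of variables yields the explicit density
\[
\frac{d\mu_\E}{dt}(t)=\frac{1}{2\pi\vert\cm'(e^{i\theta_+(t)})\vert}+\frac{1}{2\pi\vert\cm'(e^{i\theta_-(t)})\vert}
\]
at a.e.\ $t\in\E$, where $e^{i\theta_\pm(t)}\in\partial\F$ are the two preimages.

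The hard part is precisely the angular-derivative step: at boundary points $e^{i\theta_0}$ whose image $\cm(e^{i\theta_0})$ lies in no interval contained in $\E$ (the generic situation for Cantor-type $\E$), Schwarz reflection is not available, and one must extract the derivative from quantitative boundary regularity supplied by the Parreau--Widom summability in \eqref{PW} combined with the positive Lebesgue measure of $\E$.
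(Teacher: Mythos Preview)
Your proposal outlines a plausible route---showing that the boundary map $e^{i\theta}\mapsto\cm(e^{i\theta})$ has a finite nonzero angular derivative almost everywhere would indeed yield the result---but you do not actually prove this. The final paragraph is an admission of the gap, not a resolution: invoking ``Julia--Carath\'eodory/F.\,and M.\,Riesz rigidity'' is not a theorem one can cite, and nothing you wrote explains how the Parreau--Widom sum \eqref{PW} produces the required angular derivative. There is also a concrete error in the setup: $\cm$ does \emph{not} map the upper half-disk into $\C_+$, since it takes real values on each orthocircle $C_j$ bounding $\F$ (these arcs lie in the upper half-disk and are mapped onto the gaps $(\al_j,\be_j)$). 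You would have to restrict to the upper half of $\F$, on which $\cm$ is genuinely conformal onto $\C_+$; but for infinite-gap $\E$ that domain is bounded by infinitely many circular arcs, and the classical F.\,and M.\,Riesz theorem for conformal maps onto rectifiable Jordan domains does not apply without further work.

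The paper bypasses the covering map altogether at this stage and works directly with the Stieltjes transform $m_\E$ of $d\mu_\E$. Two lemmas are proved: first, if $m_\rho(t+i0)/(t+i)\in L^1(\R)$ then $d\rho$ is absolutely continuous; second, $m_\E(t+i0)$ is purely imaginary for a.e.\ $t\in\E$ (reflectionlessness, obtained by approximating $\E$ with finite-gap sets and passing to the limit in the exponential Herglotz representation). Absolute continuity then follows by verifying the $L^1$ hypothesis on three pieces: on $\E$, the reflectionless lemma gives $\vert m_\E(t+i0)\vert=\pi$ times the density of the absolutely continuous part of $d\mu_\E$, hence integrable; on each gap, $\vert m_\E\vert=\vert g'\vert$ and $\int_{\al_j}^{\be_j}\vert g'\vert=2g(c_j)$, so the total is $2\sum_j g(c_j)<\infty$ by \eqref{PW}; outside $[\al,\be]$, the $-1/t$ decay of $m_\E$ suffices. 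This argument makes the role of the Parreau--Widom condition completely explicit and requires no boundary-regularity analysis of $\cm$.
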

\begin{remark}
If $\cup_j$ in \eqref{E} is a finite union, then the result is well known (see, e.g., \cite[Chap.~5]{Simon}).
For homogeneous $\large\E$, the result is contained in \cite{MR730079} (see also \cite{MR827347}).
\end{remark}

Needless to say, the statement can only be valid when $\vert\large\E\vert>0$. Our proof relies on two lemmas, inspired by \cite{NVY}, \cite{MR2271928}. While the first applies to any probability measure on $\R$, the second is more specific to equilibrium measure.
\begin{lem}
\label{lem1}
Let $d\rho=w(t)dt+d\rho_\s$ be a probability measure on $\R$ and let
\[
m_\rho(x)=\int_\R\frac{d\rho(t)}{t-x}, \quad x\in\C\setminus\supp(d\rho)
\]
be its Stieltjes transform. If ${m_\rho(t+i0)}/{(t+i)}$ belongs to $L^1(\R)$, then $d\rho$ is absolutely continuous (i.e., $d\rho_\s=0$).
\end{lem}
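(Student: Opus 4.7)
The plan is to reconstruct $m_\rho$ via a Plemelj--Sokhotski argument enabled by the hypothesis, and then to read off from the resulting formula that $d\rho$ is absolutely continuous.

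Define $F(z):=m_\rho(z)/(z+i)$ on $\C_+$ and $\widetilde F(z):=m_\rho(z)/(z-i)$ on $\C_-$. The poles of the denominators lie in the opposite half-planes, so each is analytic on its domain; together they define a function $G$ on $\C\setminus\R$ with $G(z)=\mathcal{O}(|z|^{-2})$ at infinity (since $m_\rho(z)=\mathcal{O}(|z|^{-1})$). A short computation using $m_\rho(t-i0)=\overline{m_\rho(t+i0)}$ shows the boundary jump of $G$ across $\R$ is
\[
G_+(t)-G_-(t)=2i\,\im F_0(t),
\]
which lies in $L^1(\R)$ by hypothesis. Decay at infinity together with the $L^1$ jump yield, via the Plemelj--Sokhotski representation (with Liouville precluding any entire-function correction), the formula
\[
m_\rho(z)=\frac{z+i}{\pi}\int_\R\frac{\im F_0(t)}{t-z}\,dt,\qquad z\in\C_+.
\]

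From this representation one reads off absolute continuity. The measure $\im F_0(t)\,dt$ is absolutely continuous and therefore has no atoms, so Stieltjes inversion gives $\eps\,\im m_{\im F_0}(x+i\eps)\to 0$ for every $x$; propagating this through the multiplier $(z+i)/\pi$ yields $\eps\,\im m_\rho(x+i\eps)\to 0$ for every $x$, ruling out point masses of $d\rho$. Moreover, $\im m_\rho(x+i0)$ reduces almost everywhere to a combination of $x\,\im F_0(x)$ and the Hilbert transform $H(\im F_0)(x)$, both finite almost everywhere since $\im F_0\in L^1$; this rules out a singular continuous part. Hence $d\rho_\s=0$.

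The main obstacle is justifying the Plemelj--Sokhotski representation rigorously at $L^1$ regularity, in particular excluding any additive entire-function correction; this reduces to the $\mathcal{O}(|z|^{-2})$ decay of $G$ at infinity together with a Liouville-type argument.
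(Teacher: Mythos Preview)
Your overall strategy---recover $m_\rho$ as a Cauchy integral of its boundary data via a Plemelj--Sokhotski/Liouville argument---is essentially the paper's, but the detour through $G=m_\rho/(z\pm i)$ leaves you with a representation from which you cannot conclude as you claim. The assertion that ``$\im m_\rho(x+i0)$ is finite a.e.\ and hence $d\rho$ has no singular continuous part'' is false as a general principle: for \emph{any} finite positive measure $d\rho$, the nontangential boundary values $m_\rho(t+i0)$ exist and are finite Lebesgue-a.e.\ (standard Herglotz theory), so this cannot possibly distinguish absolutely continuous from singular continuous measures. The singular part is carried by a Lebesgue-null set, and finiteness off that set tells you nothing. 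Your formula $m_\rho(z)=\tfrac{z+i}{\pi}\int\frac{\im F_0(t)}{t-z}\,dt$ is correct, but pushing it to $d\rho_\s=0$ would require, e.g., showing $\tfrac{1}{\pi}\im m_\rho(\cdot+i\eps)\to w$ in $L^1_{\mathrm{loc}}$, and the conjugate-Poisson piece coming from the factor $(z+i)$ obstructs this (the Hilbert transform of an $L^1$ function need not be locally integrable).

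The paper avoids this by running the Cauchy-integral argument on $m_\rho$ itself rather than on $G$: set $f(x)=\tfrac{1}{2\pi i}\int_\R\frac{m_\rho(t+i0)}{t-x}\,dt$ (convergent because $\frac{m_\rho(t+i0)}{t+i}\in L^1$ and $\frac{t+i}{t-x}$ is bounded for fixed $x\notin\R$), show that $f(x)-f(\bar x)=m_\rho(x)$ on $\C_+$ via the Poisson integral, and then observe that $f(\bar x)$ is both holomorphic and antiholomorphic on $\C_+$, hence identically zero. This yields directly
\[
m_\rho(x)=\frac{1}{\pi}\int_\R\frac{\im m_\rho(t+i0)}{t-x}\,dt=\int_\R\frac{w(t)}{t-x}\,dt,
\]
and uniqueness of the Herglotz representation gives $d\rho=w(t)\,dt$ with no separate argument for atoms or singular continuous parts. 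If you wish to keep your route, the simplest repair is to drop the auxiliary factor $(z\pm i)$ and apply the same reasoning to $m_\rho$ directly.
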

\begin{proof}
Recall that the boundary values $m_\rho(t+i0)$ exist for a.e.\;$t\in\R$ and that $w(t)=\frac{1}{\pi}\im m_\rho(t+i0)$ a.e.\;on $\R$. Our goal is thus to prove that
\[
m_\rho(x)=\frac{1}{\pi}\int_\R\frac{\im m_\rho(t+i0)}{t-x}\,dt, \quad x\in\C\setminus\R.
\]
By the assumption on ${m_\rho(t+i0)}/{(t+i)}$, the integral
\[
\frac{1}{2\pi i}\int_\R\frac{m_\rho(t+i0)}{t-x}\,dt=:f(x)
\]
defines a holomorphic function in $\C\setminus\R$. We readily see that
\[
f(a+ib)-f(a-ib)=\frac{b}{\pi}\int_\R\frac{m_\rho(t+i0)}{(t-a)^2+b^2}\,dt
\]
and since the right-hand side has the same boundary values as $m_\rho(a+ib)$ for $b\downarrow 0$ (see, e.g., \cite[Thm.~5.30]{MR1307384}),
it follows that
\[
f(x)-f(\bar x)=m_\rho(x)
\]
for $x\in\C_+$. Hence $f(\bar x)$ is holomorphic on $\C_+$, but so is $\overline{f(\bar x)}$, and
therefore $f(\bar x)$ must be constant on $\C_+$. Indeed, $f(\bar x)=0$ for $x\in\C_+$
since $f(-ib)\to 0$ as $b\to\infty$. So we conclude that
\[
f(a+ib)=
\begin{cases}
m_\rho(a+ib)&\mbox{for }\, b>0, \\
\qquad 0&\mbox{for }\, b<0,
\end{cases}
\]
and the goal is now easily achieved noting that
%As the boundary values of $m$ are real when $x\notin \large\E$, we have
\[
\frac{1}{\pi}\int_\R\frac{\im m_\rho(t+i0)}{t-x}\,dt=
\frac{1}{2\pi i}\int_\R\frac{m_\rho(t+i0)-\overline{m_\rho(t+i0)}}{t-x}\,dt=
f(x)+\overline{f(\bar x)}=m_\rho(x)
\]
for $x\in\C\setminus\R$.
\end{proof}
\begin{lem}
\label{lem2}
Let $K\subset\R$ be a compact set of positive Lebesgue measure and denote by $m_K$ the Stieltjes transform of the equilibrium measure $d\mu_K$. Then the boundary values $m_K(t+i0)$ are purely imaginary for a.e.\;$t\in K$.
\end{lem}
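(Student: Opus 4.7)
The strategy I would pursue is to identify $\re m_K(t+i0)$ with the non-tangential boundary value of $\partial_x U^{\mu_K}$, where $U^{\mu_K}(z):=-\int\log|t-z|\,d\mu_K(t)$, and then exploit that $U^{\mu_K}$ attains its maximum value $\gamma(K):=-\log\ca(K)$ on $K$ outside a Lebesgue-null set.

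First I would invoke Frostman's theorem: $U^{\mu_K}(t)=\gamma(K)$ for quasi-every $t\in K$, and since polar subsets of $\R$ have Lebesgue measure zero, this holds for Lebesgue-a.e.\ $t\in K$. Let $g$ denote the Green's function of $\overline{\C}\setminus K$ with pole at infinity; then $U^{\mu_K}=\gamma(K)-g$ throughout $\C$, with $g\geq 0$ and $g(t)=0$ a.e.\ on $K$. Differentiating under the integral gives $\re m_K(z)=\partial_xU^{\mu_K}(z)=-\partial_xg(z)$ for $\im z>0$, and on the upper half-plane $g$ is recovered from its boundary values via the Poisson integral
\[
g(x+iy)=\frac{y}{\pi}\int_\R\frac{g(t)\,dt}{(t-x)^2+y^2},
\]
which is legitimate because $g=O(\log|z|)$ at infinity.

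Next, if $g$ is locally absolutely continuous—which is the case in the Parreau--Widom setting of the paper, since the hypothesis $\sum_j g(c_j)<\infty$ bounds the total variation of $g$ across each compact interval—I can differentiate under the integral to obtain $\partial_xg(x+iy)=(P_y\ast g')(x)$, where $P_y$ is the Poisson kernel. Lebesgue's differentiation theorem then yields $(P_y\ast g')(t_0)\to g'(t_0)$ as $y\to 0^+$ at every Lebesgue point of $g'$. Since $g$ is absolutely continuous and vanishes on a full-measure subset $A\subset K$, a standard fundamental-theorem-of-calculus argument (using $\int_{t_0-h}^{t_0+h}g'(s)\,ds=g(t_0+h)-g(t_0-h)=0$ along a sequence $h_n\to 0$ with $t_0\pm h_n\in A$) forces $g'(t_0)=0$ at Lebesgue-a.e.\ $t_0\in K$, giving $\re m_K(t_0+i0)=-g'(t_0)=0$, as claimed.

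The main obstacle is securing enough regularity on $g$ to justify both the interchange of differentiation and integration and the identification $\partial_xg(t_0+i0)=g'(t_0)$ a.e.\ on $K$. For Parreau--Widom $K$ the needed $g\in W^{1,1}_{\mathrm{loc}}$ is immediate, but for a general compact set of positive Lebesgue measure (as in the statement of the lemma) this may fail. The standard remedy is to approximate $K$ from within by a nested sequence of sets $K_n\uparrow K$ enjoying the required regularity, apply the argument to each $\mu_{K_n}$, and pass to the limit using weak convergence of equilibrium measures together with lower semi-continuity of the logarithmic energy.
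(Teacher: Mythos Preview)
Your approach is genuinely different from the paper's. The paper never works with $\partial_x g$ or the Poisson extension of $g$; instead it approximates $K$ \emph{from the outside} by the finite-gap sets $K_n=[\alpha,\beta]\setminus\bigcup_{j=1}^n(\alpha_j,\beta_j)$, for which the reflectionless property is classical, and then passes to the limit through the \emph{exponential (Krein) representation}
\[
m_{K_n}(x)=\vert m_{K_n}(i)\vert\exp\Bigl\{\int_\R\Bigl(\tfrac{1}{t-x}-\tfrac{t}{t^2+1}\Bigr)\xi_n(t)\,dt\Bigr\},
\qquad \xi_n(t)=\tfrac{1}{\pi}\Arg m_{K_n}(t+i0).
\]
The point is that $\xi_n\equiv\tfrac12$ on $K_n\supset K$ for every $n$, and locally uniform convergence $m_{K_n}\to m_K$ forces $\xi_n\,dt\xrightarrow{w}\xi\,dt$; combined with the trivial pointwise limit and dominated convergence one reads off $\xi=\tfrac12$ a.e.\ on $K$. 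Encoding the statement through the bounded function $\xi$ is precisely what makes the limit painless.

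Your direct potential-theoretic argument is a reasonable route when $K$ is Parreau--Widom (and that is all the paper ultimately needs), though the line ``$g(x+iy)=\frac{y}{\pi}\int_\R\frac{g(t)}{(t-x)^2+y^2}\,dt$ is legitimate because $g=O(\log|z|)$'' deserves an extra sentence: subtract $\log|z-\alpha|$, which \emph{is} the Poisson extension of its boundary values, to reduce to a bounded harmonic function. The identification $\partial_x g(t_0+i0)=g'(t_0)$ then goes through once $g\in W^{1,1}_{\mathrm{loc}}$.

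The genuine gap is your last paragraph. For general compact $K$ (which is how the lemma is stated) you propose to ``approximate $K$ from within by $K_n\uparrow K$ enjoying the required regularity'' and pass to the limit by weak convergence and lower semicontinuity of energy. This is underspecified and, as written, goes the wrong way: equilibrium measures behave well under \emph{decreasing} sequences of compacta, not increasing ones, and it is not clear one can exhaust an arbitrary compact set from inside by Parreau--Widom sets. More seriously, you give no mechanism by which the property ``$\re m_{K_n}(t+i0)=0$ a.e.\ on $K_n$'' survives a weak limit; lower semicontinuity of energy says nothing about boundary values of the Stieltjes transform. The paper's device---replacing $\re m_K$ by the bounded density $\xi$ in the exponential representation---is exactly the missing ingredient that turns the limiting step into a soft dominated-convergence argument. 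If you want to salvage your scheme, switch to the outer approximation $K_n\downarrow K$ by finite-gap sets and track $\xi_n$ rather than $\re m_{K_n}$.
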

\begin{remark}
In the language of \cite{MR2467016, MR2271928, Remling, MR2504863, PSZ}, the lemma says that $d\mu_K$ (or $m_K$) is \emph{reflectionless} on $K$.  The statement is consistent with the fact that the equilibrium potential is constant a.e.\;on $K$.
\end{remark}

\begin{proof}
Write
\[
K_n=\bigl[\al,\be\bigr]\setminus\bigcup_{j=1}^n\bigl(\al_j,\be_j\bigr),
\quad K:=K_\infty.
\]
%with the convention $K:=K_\infty$.
It is known that $m_{K_n}$ has the desired property for each $n\in\N$ (see, e.g., \cite[Chap. 5]{Simon}). By passing to a subsequence, if necessary, we can assume that $d\mu_{K_n}\xrightarrow{ w }d\mu_K$ as $n\to\infty$. Hence $m_{K_n}\to m_K$, locally uniformly on ${\C}\setminus K$.

The trick is now to consider the exponential representation of the $m$-function. We have
\begin{equation}
\label{exp}
\frac{m_{K_n}(x)}{\vert m_{K_n}(i)\vert}=\exp\biggl\{\int_\R\biggl(\frac{1}{t-x}-\frac{t}{t^2+1}\biggr)\xi_n(t)dt\biggr\},
\end{equation}
where $\xi_n(t)=\frac{1}{\pi}\Arg m_{K_n}(t+i0)$ a.e.\;on $\R$. Obviously, $\xi_n(t)=0$ for $t<\al$ and $\xi_n(t)=1$ for $t>\be$. So the right-hand side in \eqref{exp} reduces to
\[
\frac{\sqrt{1+\be^2}}{\be-x}\exp\biggl\{\int_\al^\be\biggl(\frac{1}{t-x}-\frac{t}{t^2+1}\biggr)\xi_n(t)dt\biggr\}.
\]
But more importantly, we have $\xi_n(t)=1/2$ for all $t\in K_n$, $n\in\N$.
The uniform convergence of $m_{K_n}$ implies that $\int_\al^\ga\xi_n(t)dt$ converges uniformly for $\ga$ in $[\al,\be]$ (see, e.g., \cite[Sect. 2]{AD}). Equivalently, $\xi_n(t)dt\xrightarrow{ w }\xi(t)dt$ as measures on $[\al, \be]$ (where $\xi$ is short for $\xi_\infty$).
Since the Green's function for $\overline{\C}\setminus K_n$ converges locally uniformly to the Green's function for $\Om:=\overline{\C}\setminus K$, the critical points converge too. Therefore,
\[
\xi_n(t)\xrightarrow[n\to\infty]{\mbox{\small pointwise}}
\begin{cases}
1/2 & \mbox{for }\, t\in K, \\
\,\,\,1 & \mbox{for }\, t\in(\al_j, c_j), \\
\,\,\,0 & \mbox{for }\, t\in(c_j, \be_j),
\end{cases}
\]
where $\{c_j\}$ are the critical points of $g_{\,\Om}$. We conclude that $\xi(t)=1/2$ for a.e.\;$t\in K$ and the result follows.
\end{proof}

\begin{proof}[Proof of Proposition]
By Lemma \ref{lem1}, it suffices to show that $m_\E(t+i0)/(t+i)$ is integrable on $\R$.
We split the integral into 3 parts, namely $i)$ $\int_\E$, $ii)$ $\int_{\cup_j(\al_j,\be_j)}$, and $iii)$ $\int_{\R\setminus[\al, \be]}$. \newline
$i)$ According to Lemma \ref{lem2}, $m_\E(t+i0)$ is purely imaginary a.e.\;on $\large\E$. When restricted to $\large\E$,
$\vert m_\E(t+i0)\vert$ is therefore the absolutely continuous part of a finite measure. Hence,
\[
\int_\E\,\biggl\vert\frac{m_\E(t+i0)}{t+i}\biggr\vert\,dt<\infty.
\]
$ii)$ By \eqref{equi-pot}, we can relate $m_\E$ to the derivative of $g$ in the gaps of $\large\E$ and get the estimate
\[
\int_{\cup_j(\al_j,\be_j)}\biggl\vert\frac{m_\E(t+i0)}{t+i}\biggr\vert\,dt\leq 2\sum_j g(c_j).
\]
Since $\large\E$ is a Parreau--Widom set, the sum on the right-hand side is $<\infty$. \newline
$iii)$ Recall from \eqref{infinity} that $m_\E$ decays like $-1/t$ at $\infty$. Therefore,
\[
\int_{\R\setminus[\al,\be]}\biggl\vert\frac{m_\E(t+i0)}{t+i}\biggr\vert\,dt<\infty.
\]
This completes the proof.
\end{proof}

By Prop.~\ref{equi ac}, we can write the equilibrium measure of $\large\E$ as $d\mu_\E=f_\E(t)dt$.
It follows from \eqref{preserve equi} and Lemma \ref{lem2} that
\begin{equation}
\label{logE}
\int_\E\log f_\E(t)\,d\mu_\E(t)=
\int_0^{2\pi}\log\bigl\vert M_\E(e^{i\theta})\bigr\vert\frac{d\theta}{2\pi}-\log\pi>-\infty.
\end{equation}
Hence the Szeg\H{o} integral (i.e., $\int_\E\log f\,d\mu_\E$) is closely related to a relative entropy, specifically
\begin{equation}
\int_\E\log f(t)\,d\mu_\E(t)-\int_\E\log f_\E(t)\,d\mu_\E(t)=
-\int_\E\log\biggl(\frac{f_\E(t)}{f(t)}\biggr)d\mu_\E(t)=:S(d\mu_\E\,|\,d\mu).
\end{equation}
Moreover, if either $\log f$ or $\log f_n$ belongs to $L^1(\large\E, d\mu_\E)$ then we can write the integral in \eqref{n step} as
\begin{equation}
\label{relativ entropi}
\int_\E \log\biggl(\frac{f(t)}{f_n(t)}\biggr)d\mu_\E(t)=S(d\mu)-S(d\mu_n),
\end{equation}
where $S(\,\cdot\,)$ is short notation for the relative entropy $S(d\mu_\E\,|\,\cdot\,)$.

\subsection{Eigenvalue sums}
Let $P_n$ denote the projection on the subspace spanned by the first $n$ basis vectors in $\ell^2(\N)$ (i.e., the vectors $e_1=(1,0,0,\ldots)$, $e_2=(0,1,0,\ldots)$, etc). The following result applies to Jacobi matrices in general.
\begin{lem}
\label{ev in gap}
Let $J$ be a bounded Jacobi matrix and suppose that $\si_\ess(J)\cap(a,b)=\emptyset$ for some $a<b$. If $J$ has no eigenvalues in $(a,b)$, then both $P_n J P_n$ and $(1-P_n)J(1-P_n)$ have at most one eigenvalue between $a$ and $b$.
\end{lem}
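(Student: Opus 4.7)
The plan is to argue by contradiction for each of the two truncations, using a one-line resolvent identity together with the strict monotonicity of the diagonal Green's function on the gap. Since $\si_\ess(J)\cap(a,b)=\emptyset$ and $J$ has no eigenvalues in $(a,b)$, the full spectrum of $J$ misses $(a,b)$, so for every $\lambda\in(a,b)$ the resolvent $(J-\lambda)^{-1}$ is a bounded operator on $\ell^2(\N)$.

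For $(1-P_n)J(1-P_n)$, viewed on its range, suppose it has two eigenvalues $\lambda_1<\lambda_2$ in $(a,b)$ with orthonormal eigenvectors $\psi_1,\psi_2$ supported on $\{n+1,n+2,\ldots\}$. The Jacobi recurrence and positivity of the $a_k$ immediately give $\psi_i(n+1)\neq 0$, for otherwise zeros propagate and force $\psi_i\equiv 0$. Extending each $\psi_i$ by zero to all of $\ell^2(\N)$ and computing $J\psi_i$ coordinatewise, one finds that only the $n$-th coordinate is off, yielding
\[
(J-\lambda_i)\psi_i=a_n\psi_i(n+1)\,e_n.
\]
Applying $(J-\lambda_i)^{-1}$ and reading off the $n$-th coordinate of $\psi_i$, which vanishes by construction, gives $0=\psi_i(n)=a_n\psi_i(n+1)\,G(n,n;\lambda_i)$ where $G(k,k;\lambda):=\langle e_k,(J-\lambda)^{-1}e_k\rangle$; hence $G(n,n;\lambda_i)=0$ for $i=1,2$.

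The finishing step exploits that $G(n,n;\,\cdot\,)$ is a diagonal matrix element of the resolvent and therefore a Herglotz function of $\lambda$, with derivative $\partial_\lambda G(n,n;\lambda)=\|(J-\lambda)^{-1}e_n\|^2>0$ throughout the resolvent set. Thus $G(n,n;\,\cdot\,)$ is strictly increasing on the gap $(a,b)$ and vanishes at most once there, which forces $\lambda_1=\lambda_2$ and yields the desired contradiction. The case of $P_nJP_n$ is completely parallel: extending an eigenvector of the $n\times n$ truncation by zero produces $(J-\lambda_i)\psi_i=a_n\psi_i(n)\,e_{n+1}$, and the same computation yields $G(n+1,n+1;\lambda_i)=0$, from which strict monotonicity of $G(n+1,n+1;\,\cdot\,)$ again closes the argument. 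The only point that requires a moment's care is the non-vanishing of the relevant boundary coordinate, but this is immediate from the three-term recurrence and positivity of the off-diagonals.
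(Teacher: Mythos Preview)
Your argument is correct, and it gives a genuinely different proof from the one in the paper. The paper proceeds operator-theoretically: the hypothesis that $J$ has no spectrum in $(a,b)$ is rewritten as the quadratic inequality $(J-\tfrac{a+b}{2})^2\ge(\tfrac{b-a}{2})^2$, and then for $P=P_n$ or $P=1-P_n$ one computes $(PJP-\tfrac{a+b}{2})^2=P(J-\tfrac{a+b}{2})^2P-PJ(1-P)JP$ on $\ran(P)$; since $PJ(1-P)JP$ has rank one for these particular projections, the min--max principle allows at most one eigenvalue of $PJP$ to slip into $(a,b)$.

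Your approach instead exploits the Jacobi structure directly: an eigenvector of the truncated operator, extended by zero, misses being an eigenvector of $J$ by a single boundary term, and inverting $(J-\lambda)$ pins the eigenvalue to a zero of a diagonal resolvent element $G(k,k;\lambda)$; since this Herglotz function is strictly increasing on the gap, it has at most one zero there. This is a clean alternative that ties nicely to the interlacing and $m$-function language used elsewhere in the paper. The paper's argument, by contrast, is coordinate-free and would apply verbatim to any self-adjoint operator for which the relevant off-diagonal block $PJ(1-P)$ has rank one, without needing a three-term recurrence or the non-vanishing of boundary coordinates.
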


\begin{remark}
Note that $J^{(n)}:=P_nJP_n$ is the upper left $n\times n$ corner of $J$ while $(1-P_n)J(1-P_n)$ is equal to $J_n$, the $n$ times stripped matrix.
\end{remark}

\begin{proof}
By the spectral mapping theorem, $J$ has no eigenvalues in $(a,b)$ if and only if $(J-a)(J-b)\geq 0$. Put differently, this means
\[
\Bigl( J-\frac{a+b}{2}\Bigr)^2\geq \Bigl(\frac{b-a}{2}\Bigr)^2.
\]
Given a projection $P$, write $(PJP)^2=PJ^2P-PJ(1-P)JP$ in order to get
\begin{align*}
\Bigl( PJP-\frac{a+b}{2} \Bigr)^2&=
P\Bigl( J-\frac{a+b}{2} \Bigr)^2P-PJ(1-P)JP \\
&\geq P\Bigl( \frac{b-a}{2} \Bigr)^2P-PJ(1-P)JP,
\end{align*}
where all operators are restricted to $\ran(P)$. When $P=P_n$ or $P=1-P_n$, the perturbation $PJ(1-P)JP$ has rank one and introduces at most one eigenvalue in $(a,b)$.
\end{proof}

In what follows, we specialize to the situation where $J=\{a_n,b_n\}_{n=1}^\infty$ has essential spectrum equal to $\large\E$ (i.e., the setting of Section \ref{secM}). For arbitrary $\tilde{J}$ without eigenvalues, let $\tilde{J}^{(n)}$ be the finite rank perturbation given by
\begin{equation}
\label{J tilde}
\tilde{J}^{(n)}=\left( \begin{matrix}
b_1 & a_1 &&&&& \\
a_1 & b_2 & a_2 &&& \\
\vspace{-0.1cm}
& \ddots & \ddots & \ddots && \\
&& \ddots & b_n & a_n & \\
&&& a_n & \phantom{.}_\ulcorner
& \hspace{-0.15cm} \phantom{.}_{\overline{{\phantom{.}}^{\phantom{\widetilde{\hat J}di}}}}  \\
&&&& {\lvert} & \tilde{J}  \\
\end{matrix} \right).
\end{equation}
Denote by $x_k^{(n)}$ the finitely many eigenvalues of $J^{(n)}$ and by $\tilde{x}_k^{(n)}$ the eigenvalues of $\tilde{J}^{(n)}$.
%As before, $\{x_k\}$ and $\{x_{n,k}\}$ are the eigenvalues of $J$ and $J_n$, respectively.
The following result gives the desired upper bound on eigenvalue sums.
\begin{prop}
\label{bound on ev}
Assume that
\begin{equation}
\label{xk}
\sum_k g(x_k)<\infty.
\end{equation}
Then there is a constant $C>0$ such that
\begin{equation}
\label{bound}
\sum_k g(x_{n,k}), \; \sum_k g\bigl(x_k^{(n)}\bigr), \; \sum_k g\bigl(\tilde{x}_k^{(n)}\bigr) \leq C
\end{equation}
for all $n\geq 1$.
\end{prop}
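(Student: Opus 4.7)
The plan is to use Lemma~\ref{ev in gap} for interlacing in conjunction with the concavity of $g$ on each gap of $\E$. The key building block is the following quantitative estimate, essentially the one behind \eqref{in gaps}: if $\{y_i\}, \{z_k\} \subset (\alpha_j, \beta_j)$ satisfy the interlacing property that every subinterval of $(\alpha_j, \beta_j)$ containing no $y_i$ contains at most one $z_k$, then
\[
\sum_k g(z_k) \leq g(c_j) + \sum_i g(y_i).
\]
Indeed, $g$ is strictly concave on the gap and attains its maximum $g(c_j)$ at $c_j$; each $z_k$ lying in a subinterval not containing $c_j$ is dominated by $g$ evaluated at the $y_i$-endpoint closer to $c_j$ (by monotonicity of $g$ on each side of $c_j$), while the single subinterval containing $c_j$ contributes at most $g(c_j)$. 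Summing over gaps and invoking \eqref{PW} and \eqref{xk} will give the desired uniform bound.

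For $J_n$ and $J^{(n)}$, Lemma~\ref{ev in gap} applied to each subinterval of $(\alpha_j,\beta_j)$ free of $J$-eigenvalues is exactly the interlacing hypothesis of the building block with $\{y_i\} = \{x_k : x_k \in (\alpha_j,\beta_j)\}$. Hence the gap contributions are bounded by $g(c_j) + \sum_{x_k\in(\alpha_j,\beta_j)} g(x_k)$, and summing yields $\sum_j g(c_j) + \sum_k g(x_k) < \infty$. For $\R \setminus [\alpha,\beta]$, $g$ is strictly monotone (decreasing on $(-\infty,\alpha)$, increasing on $(\beta,\infty)$), and since all matrices in sight have norm uniformly bounded by $\max(\|J\|,\|\tilde J\|)$ their eigenvalues lie in a fixed compact set; an analogous monotonicity-based variant of the building block gives a finite contribution, with one bounded constant coming from the outermost half-unbounded subinterval on each side.

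For $\tilde J^{(n)}$ there is no direct interlacing with $J$, so I would compare with $A := J^{(n)} \oplus \tilde J$, whose discrete spectrum outside $\E$ coincides with that of $J^{(n)}$ (since $\tilde J$ has no eigenvalues). From \eqref{J tilde},
\[
\tilde J^{(n)} = A + \tfrac{a_n}{2}\,|e_n + e_{n+1}\rangle\langle e_n + e_{n+1}| - \tfrac{a_n}{2}\,|e_n - e_{n+1}\rangle\langle e_n - e_{n+1}|,
\]
so $\tilde J^{(n)} - A$ is a difference of two rank-one nonnegative operators. For each rank-one self-adjoint perturbation, classical eigenvalue interlacing ensures that in every subinterval with no eigenvalue of the unperturbed operator, the perturbed one has at most one eigenvalue, so the building-block hypothesis is preserved. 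Applying the building block twice (once per rank-one step), starting from the bound on $J^{(n)}$, gives in each gap
\[
\sum_{k:\,\tilde x_k^{(n)}\in(\alpha_j,\beta_j)} g(\tilde x_k^{(n)}) \leq 3\,g(c_j) + \sum_{k:\,x_k\in(\alpha_j,\beta_j)} g(x_k),
\]
which sums to a finite bound; $\R \setminus [\alpha,\beta]$ is handled similarly. The main obstacle is this two-step interlacing argument for $\tilde J^{(n)}$: Lemma~\ref{ev in gap} does not apply directly, and one must verify that each rank-one step preserves the per-gap interlacing so that the Parreau--Widom overhead $g(c_j)$ accumulates only additively in the gap index rather than, say, multiplicatively via a crude $(N_j+3)g(c_j)$ estimate that would fail to sum.
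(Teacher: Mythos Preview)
Your proposal is correct and follows essentially the same route as the paper: Lemma~\ref{ev in gap} plus concavity of $g$ in each gap handles $J_n$ and $J^{(n)}$, and for $\tilde J^{(n)}$ you use the very same decomposition $\tilde J^{(n)} - (J^{(n)}\oplus\tilde J)$ into a positive and a negative rank-one piece, then interlace twice. The only cosmetic differences are that the paper exploits the signs of the two rank-one pieces a bit more directly (positive shifts eigenvalues right, negative shifts them left) rather than invoking generic rank-one interlacing, and it is terser about $\R\setminus[\alpha,\beta]$; your explicit norm bound there and the resulting constant $3\sum_j g(c_j)+\sum_k g(x_k)$ are perfectly fine.
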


\begin{proof}
For the first two series, the choice of
\[
C=2\sum_k g(x_k)+\sum_jg(c_j)<\infty
\]
works. This follows immediately from Lemma \ref{ev in gap}. To estimate the third, consider first $J^{(n)}\oplus\tilde{J}$ (i.e., set $a_n=0$ in $\tilde{J}^{(n)}$). Since $\tilde{J}$ has no eigenvalues, this direct sum has the same eigenvalues as $J^{(n)}$. The rank two perturbation coming from $a_n>0$ can be written as
\[
\begin{pmatrix} 0 & a_n \\ a_n & 0\end{pmatrix}=
\frac{a_n}{2}\begin{pmatrix}1&1\\1&1\end{pmatrix}
+\frac{a_n}{2}\begin{pmatrix}-1&\phantom{-}1\\\phantom{-}1&-1\end{pmatrix}.
\]
Near the right end of a gap, the negative rank one perturbation
$\left(\begin{smallmatrix} -1 & \phantom{-}1 \\ \phantom{-}1 & -1
\end{smallmatrix}\right)$ interlaces the eigenvalues and the
positive rank one perturbation $\left(\begin{smallmatrix} 1 & 1 \\
1 & 1 \end{smallmatrix}\right)$ then moves the eigenvalues to the
right. Near the left end of a gap, a similar argument applies. Hence,
\[
\sum_k g\bigl(\tilde{x}_k^{(n)}\bigr)\leq\sum_k g\bigl({x}_k^{(n)}\bigr)+2\sum_jg(c_j)
\]
and the result follows.
\end{proof}
In the next section, we take $\tilde{J}$ to be the Jacobi matrix corresponding to the equilibrium measure of $\large\E$.

\section{Szeg\H{o}'s theorem}
\label{proof}
We are now ready to prove the main result of the paper.
\begin{theorem}
\label{main thm}
Let $J=\{a_n, b_n\}_{n=1}^\infty$ be a Jacobi matrix with spectral measure $d\mu=f(t)dt+d\mu_\s$
and let $\large\E\subset\R$ be a Parreau--Widom set of $\vert\large\E\vert>0$.
Assume that $\si_\ess(J)=\large\E$ and denote by $\{x_k\}$ the eigenvalues of $J$ outside $\large\E$, if any.
On condition that $\sum_k g(x_k)<\infty$, we have
\begin{equation}
\label{Sz}
\int_\E\log f(t)\,d\mu_\E(t)>-\infty
\end{equation}
if and only if
\begin{equation}
\label{limsup}
\limsup_{n\to\infty} \frac{a_1\cdots a_n}{\ca(\large\E)^n}>0.
\end{equation}
In particular, \eqref{Sz} is equivalent to $a_1\cdots a_n/\ca(\large\E)^n\not\rightarrow 0$ for measures $d\mu$ supported on $\large\E$.
\end{theorem}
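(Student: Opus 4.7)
The plan is to apply the step-by-step sum rule \eqref{n step} in two ways: once to $J$ itself and once to the comparison matrix $\tilde J^{(n)}$ of \eqref{J tilde}, built from $J$'s first $n$ coefficients and from the Jacobi matrix $\tilde J$ of the equilibrium measure $d\mu_\E$. With the shorthand $S(d\nu)=S(d\mu_\E\,|\,d\nu)$ and using \eqref{relativ entropi}, the sum rule applied to $\tilde J^{(n)}$ collapses to
\[
\log\frac{a_1\cdots a_n}{\ca(\large\E)^n}=\sum_j g\bigl(\tilde x_j^{(n)}\bigr)+\tfrac{1}{2}\,S\bigl(d\tilde\mu^{(n)}\bigr),
\]
since the $n$-times stripped matrix of $\tilde J^{(n)}$ is $\tilde J$, which has spectral measure $d\mu_\E$ and no eigenvalues outside $\large\E$. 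Because $S(d\nu)\leq 0$ for every probability measure $d\nu$ (Jensen's inequality, as $-S$ is a Kullback--Leibler divergence), Prop.~\ref{bound on ev} immediately yields a \emph{uniform upper bound} $a_1\cdots a_n/\ca(\large\E)^n\leq e^C$.

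For the implication from Szeg\H{o} to $\limsup>0$, I would apply \eqref{n step} directly to $J$, giving
\[
\log\frac{a_1\cdots a_n}{\ca(\large\E)^n}=\sum_k\bigl(g(x_k)-g(x_{n,k})\bigr)+\tfrac{1}{2}\bigl(S(d\mu)-S(d\mu_n)\bigr).
\]
The eigenvalue sum is bounded below by $-\sum_k g(x_{n,k})\geq -C$ by Prop.~\ref{bound on ev}, and $-S(d\mu_n)\geq 0$; hence the right-hand side exceeds $-C+\tfrac{1}{2}S(d\mu)$, which is finite when the Szeg\H{o} integral converges. One actually obtains the stronger statement $\liminf a_1\cdots a_n/\ca(\large\E)^n>0$.

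For the converse, assume a subsequence $(n_k)$ with $a_1\cdots a_{n_k}/\ca(\large\E)^{n_k}\geq\delta>0$. Rearranging the $\tilde J^{(n_k)}$ identity yields $S\bigl(d\tilde\mu^{(n_k)}\bigr)\geq 2\log\delta-2C$. Since $\tilde J^{(n)}e_j=Je_j$ as soon as $n\geq j$, the matrices $\tilde J^{(n)}$ converge to $J$ strongly and the spectral measures converge weakly, $d\tilde\mu^{(n_k)}\to d\mu$ (on a further subsequence, if needed). The functional $d\nu\mapsto S(d\mu_\E\,|\,d\nu)$ is upper semicontinuous under weak convergence of probability measures on a fixed compact set, so
\[
S(d\mu)\geq\limsup_k S\bigl(d\tilde\mu^{(n_k)}\bigr)\geq 2\log\delta-2C>-\infty,
\]
which is precisely the Szeg\H{o} condition.

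The single nontrivial ingredient beyond the sum rules and eigenvalue estimates already at hand is the upper semicontinuity of $S(d\mu_\E\,|\,\cdot\,)$; this is the standard ``easy half'' behind Killip--Simon type arguments, equivalent to lower semicontinuity of the Kullback--Leibler divergence in its second argument and provable via Jensen applied to Poisson-smoothed densities. Once it is in hand, the closing ``in particular'' assertion is immediate: for $d\mu$ supported on $\large\E$ the Blaschke hypothesis is vacuous, and the uniformly bounded, nonnegative sequence $a_1\cdots a_n/\ca(\large\E)^n$ has positive $\limsup$ if and only if it does not tend to zero.
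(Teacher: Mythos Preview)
Your proposal is correct and follows essentially the same approach as the paper's own proof: apply the iterated sum rule \eqref{n step} to $J$ (using Prop.~\ref{bound on ev} and $S(d\mu_n)\le 0$) for the implication \eqref{Sz}$\Rightarrow$\eqref{limsup}, and apply it to $\tilde J^{(n)}$ with $\tilde J=J_{\E}$ (using Prop.~\ref{bound on ev}, strong convergence $\tilde J^{(n)}\to J$, and weak upper semicontinuity of $S(d\mu_{\E}\,|\,\cdot\,)$) for the converse. The only cosmetic difference is that you phrase the converse via a subsequence and a lower bound on $S(d\mu)$, whereas the paper takes the $\limsup$ directly to obtain the inequality $\limsup a_1\cdots a_n/\ca(\large\E)^n\le C'\exp\{\tfrac12 S(d\mu)\}$; these are equivalent rearrangements of the same estimate.
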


\begin{proof}
Assume first that the Szeg\H{o} condition \eqref{Sz} holds. Then $S(d\mu)>-\infty$ and by \eqref{n step} together with \eqref{relativ entropi}, we have
\begin{equation}
\label{1-2}
\log\Bigl(\frac{a_1\cdots a_n}{\ca(\large\E)^n}\Bigr)=
\sum_k\bigl(g(x_k)-g(x_{n,k})\bigr)+
\tfrac{1}{2}\bigl(S(d\mu)-S(d\mu_n)\bigr).
\end{equation}
According to Prop.~\ref{bound on ev}, the eigenvalue sum $\sum_k g(x_{n,k})$ is bounded above, uniformly in $n$, and the relative entropy $S(d\mu_n)$ is $\leq 0$ for all $n$. Hence the right-hand side of \eqref{1-2} is bounded below and thus
\begin{equation}
\label{liminf}
\liminf_{n\to\infty}\frac{a_1\cdots a_n}{\ca(\large\E)^n}>0.
\end{equation}
So much the more, \eqref{limsup} is true.

To prove that $\limsup>0$ implies the Szeg\H{o} condition, let $J_\E$ be the Jacobi matrix of $d\mu_\E$ and set $\tilde{J}=J_\E$ in \eqref{J tilde}. Then $\tilde{J}^{(n)}$ reduces to $J_\E$ if we coefficient strip $n$ times. By Prop.~\ref{bound on ev} and because $\log f_\E$ is integrable with respect to $d\mu_\E$, cf.\,\eqref{logE}, the iterated step-by-step sum rule \eqref{n step} applies to $\tilde{J}^{(n)}$.
Since $d\mu_\E$ has no eigenvalues and $S(d\mu_\E)=0$, we arrive at
\begin{equation}
\label{2-1}
\log\Bigl(\frac{a_1\cdots a_n}{\ca(\large\E)^n}\Bigr)=
\sum_k g\bigl(\tilde{x}_k^{(n)}\bigr)+
\tfrac{1}{2}S(d\tilde{\mu}_n),
\end{equation}
where $d\tilde{\mu}_n$ is the spectral measure of $\tilde{J}^{(n)}$. Clearly, $\tilde{J}^{(n)}$ converges strongly to $J$ as $n\to\infty$ and thus $d\tilde{\mu}_n\xrightarrow{w} d\mu$. As relative entropy is weakly upper semi-continuous, we therefore have
\[
\limsup_{n\to\infty} S(d\tilde{\mu}_n)\leq S(d\mu).
\]
Hence,
\begin{equation}
\label{2nd}
\limsup_{n\to\infty}\frac{a_1\cdots a_n}{\ca(\large\E)^n}\leq
C'\exp\bigl\{\tfrac{1}{2}S(d\mu)\bigr\},
\end{equation}
where $C'$ is $\exp$ of the constant in \eqref{bound}. In this way, \eqref{limsup} implies \eqref{Sz}.
\end{proof}

\begin{cor}
In the setting of Thm.~\ref{main thm}, if one and hence both of the equivalent conditions \eqref{Sz}--\eqref{limsup} hold true, then
\[
0<\liminf_{n\to\infty}\frac{a_1\cdots a_n}{\ca(\large\E)^n}\leq \limsup_{n\to\infty}\frac{a_1\cdots a_n}{\ca(\large\E)^n}<\infty.
\]
\end{cor}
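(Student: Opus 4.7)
The plan is to read off both bounds directly from the two halves of the proof of Thm.~\ref{main thm}; no new ingredient is needed. Throughout I may assume that the Szeg\H{o} condition \eqref{Sz} holds, so that the relative entropy $S(d\mu)=S(d\mu_\E\,|\,d\mu)$ is finite. I shall also use the standard fact that $S(d\nu)\le 0$ for every probability measure $d\nu$ on $\large\E$ whose a.c.\ part I call $h$: this is Jensen's inequality applied to the concave function $\log$ with respect to $d\mu_\E$, together with $\int h\,dt\le 1$.

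For the lower bound I would invoke the iterated step-by-step sum rule \eqref{1-2},
\[
\log\Bigl(\frac{a_1\cdots a_n}{\ca(\large\E)^n}\Bigr)=\sum_k\bigl(g(x_k)-g(x_{n,k})\bigr)+\tfrac{1}{2}\bigl(S(d\mu)-S(d\mu_n)\bigr).
\]
By Prop.~\ref{bound on ev} the sum $\sum_k g(x_{n,k})$ is bounded above uniformly in $n$, while $S(d\mu_n)\le 0$ by the general fact just recalled and $S(d\mu)>-\infty$ by hypothesis. Hence the right-hand side is bounded below uniformly in $n$. This is precisely \eqref{liminf} and yields $\liminf_{n\to\infty} a_1\cdots a_n/\ca(\large\E)^n>0$.

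For the upper bound I would simply quote \eqref{2nd}, which already reads
\[
\limsup_{n\to\infty}\frac{a_1\cdots a_n}{\ca(\large\E)^n}\le C'\exp\bigl\{\tfrac{1}{2}S(d\mu)\bigr\},
\]
with $C'$ the exponential of the constant furnished by Prop.~\ref{bound on ev}. Since $S(d\mu)>-\infty$ under the Szeg\H{o} hypothesis, the right-hand side is finite, and $\limsup<\infty$ follows.

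In short, there is no substantial obstacle: the two halves of the proof of Thm.~\ref{main thm} in fact deliver two-sided quantitative bounds on $\log(a_1\cdots a_n/\ca(\large\E)^n)$, and the corollary is obtained simply by reassembling them. The only small point that deserves a line of its own is the sign $S(d\mu_n)\le 0$, which is Jensen's inequality as above.
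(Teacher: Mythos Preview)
Your proof is correct and follows exactly the paper's own argument: the paper simply cites \eqref{liminf} and \eqref{2nd}, and you have recapitulated how those two inequalities were obtained in the proof of Thm.~\ref{main thm}. One cosmetic remark: when you justify $S(d\mu_n)\le 0$ you speak of a probability measure ``on $\large\E$'', but $d\mu_n$ may have mass outside $\large\E$; your Jensen argument still goes through since only $\int_{\E} f_n\,dt\le 1$ is needed.
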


\begin{proof}
The statement follows immediately from \eqref{liminf} and \eqref{2nd}.
\end{proof}

\bibliographystyle{plain}
\bibliography{references}

\end{document}